\documentclass[12pt,a4]{amsart}
\usepackage{amsmath,amssymb,amsfonts, amscd,enumerate}
\usepackage{amsbsy}
\usepackage{amsthm}
\usepackage{amstext}
\usepackage{amsopn}
\usepackage[german, english]{babel}
\usepackage{hyperref}
\usepackage{mathrsfs}
\usepackage{graphicx}
\usepackage{latexsym}
\usepackage{color}

\usepackage[utf8]{inputenc}

\newcommand{\Hmm}[1]{\leavevmode{\marginpar{\tiny%
$\hbox to 0mm{\hspace*{-0.5mm}$\leftarrow$\hss}%
\vcenter{\vrule depth 0.1mm height 0.1mm width \the\marginparwidth}%
\hbox to 0mm{\hss$\rightarrow$\hspace*{-0.5mm}}$\\\relax\raggedright #1}}}

  \def\Efun{\mathscr{E}}

  \def\Efun{\mathcal{E}}

\newtheorem{thm}{Theorem}[section]

\newtheorem{lemma}[thm]{Lemma}

\theoremstyle{definition}

\newtheorem{eg}[thm]{Example}
\newtheorem{rem}[thm]{Remark}

\numberwithin{equation}{section}
\numberwithin{thm}{section}

\newcommand{\R}{{\mathbb R}}

\newcommand{\N}{{\mathbb N}}

\newcommand{\al}{{\alpha}}

\newcommand{\de}{{\delta}}

\newcommand{\gm}{{\gamma}}

\newcommand{\si}{{\sigma}}
\newcommand{\lm}{{\lambda}}
\newcommand{\ph}{{\varphi}}
\newcommand{\Deg}{{\mathrm{Deg}}}

\newcommand{\lmin}{{\lambda}^{(0)}_{p}}
\newcommand{\lmax}{{\lambda}^{(1)}_{p}}

\newcommand{\lmaxq}{{\lambda}^{(1)}}

\newcommand{\hmin}{{h}^{(0)}_{p}}
\newcommand{\hmax}{{h}^{(1)}_{p}}

\newcommand{\hminq}{{h}^{(0)}}
\newcommand{\hmaxq}{{h}^{(1)}}

\begin{document}
\title[Cheeger inequalities for  $p$-Laplacians]
{General Cheeger inequalities for  $p$-Laplacians on   graphs}
\author[M.~Keller]{Matthias Keller}\address{M. Keller, Mathematisches Institut \\Friedrich Schiller Universit{\"a}t Jena \\07743 Jena, Germany } \email{m.keller@uni-jena.de}
\author[D.~Mugnolo]{Delio Mugnolo}\address{Delio Mugnolo, Lehrgebiet Analysis, FernUniversit\"at in Hagen, 58084 Hagen, Germany}
\email{delio.mugnolo@fernuni-hagen.de}

\thanks{
This  research has been partially supported by the Land
Baden-W\"urttemberg in the framework of the
\emph{Juniorprofessorenprogramm} -- research project on ``Symmetry
methods in quantum graphs'' and the DFG in the framework of the
project ``Geometry of discrete spaces and spectral theory of
non-local operators''.}

\maketitle

\begin{abstract}
We prove Cheeger inequalities for $p$-Laplacians on finite and
infinite weighted graphs. Unlike in previous works, we do not impose
boundedness of the vertex  degree, nor do we restrict ourselves to
the normalized Laplacian and, more generally, we do not impose any
boundedness assumption on the geometry. This is achieved by a novel
definition of the measure of the boundary which is using the idea of
intrinsic metrics.
 For the non-normalized case, our bounds on the spectral gap of  $p$-Laplacians
 are already significantly better  for finite graphs and for
infinite graphs they yield non-trivial bounds even in the case of
unbounded vertex degree. We, furthermore, give upper bounds  by the
Cheeger constant and by the exponential volume growth of distance
balls.
\end{abstract}

\section{Introduction}
Cheeger inequalities have a long history and are relevant for both pure mathematics and applied mathematics. The pure mathematical interest stems from the fact that they connect geometry and spectral theory. In applications they are  used to partition the underlying space in an efficient way.

From the perspective of pure mathematics  the history of our topic
starts with the work of Cheeger \cite{Che70}. On compact manifolds
Cheeger used an isoperimetric constant to estimate the first
non-trivial eigenvalue of the Laplacian from below. This
isoperimetric constant -- thus called \emph{Cheeger constant} ever
since -- serves as measure for separating  the manifold  into two
approximately equally sized  parts.

Similar ideas for finite graphs were independently found shortly
afterwards in the pioneering work of Fiedler~\cite{Fie73},  where
the first non-trivial eigenvalue of the graph Laplacian is shown to
be a quantitative measure of the graph's connectedness. The first
``genuine'' Cheeger estimates on graphs are due to Dodziuk
~\cite{Dod84} and Alon/Milmann~\cite{AloMil85}. Since then these
estimates have been improved and various variants have been shown.
However, it was only until recently that non-trivial estimates for
unbounded graph Laplacians were available. Specifically, in previous
investigations either it was the \emph{normalized} graph Laplacian
(which is always a bounded operator) that was considered, or else an
upper bound on the vertex degree appeared in the denominator of the
lower bound, thus making the inequality trivial whenever the degree
is unbounded. In \cite{BauKelWoj15} a novel measure of the boundary
of a set has been introduced using the concepts of intrinsic metrics
for non-local Dirichlet forms. These metrics have first been
systematically studied by Frank/Lenz/Wingert \cite{FraLenWin14}
for general regular Dirichlet forms. Since then they have proven a
very efficient tool, see e.g.\ the recent survey~\cite{Kel15} on
graphs.

The history sketched above for the classical case of the linear
Laplacian has inspired analogs in non-linear theory. After Cheeger
\cite{Che70} treated the linear case $p=2$ and Yau~\cite{Yau75}
proved an equality for $p=1$, Kawohl/Fridman~\cite{KawFri03}
generalized Cheeger's inequality to the $p$-Laplace-Beltrami
operators, for $p>1$. Cheeger inequalities for the $p$-Laplacian (or
the normalized $p$-Laplacian) on finite graphs can be found in
\cite{Amg03,Tak03,BuhHei09}.

The applications perspective is converse. Here, one is interested in
finding graph partitions. While  computing the Cheeger constant of a
graph is an NP-hard problem, see e.g.~\cite{Kai04}, the computation
of the first non-trivial eigenvalue and the corresponding
eigenfunction is rather efficient{, by simple variational methods}.
Thus,  Fiedler's intuition \cite{Fie73,Fie75} had far-reaching
repercussions in theoretical computer science. {In particular, the
supports of} the positive and negative part of the first non-trivial
eigenfunction {of the graph Laplacian (or $p$-Laplacian)} suggest a
reasonable splitting of the graph. In view of the Cheeger
inequalities this splitting is close to the optimal Cheeger cut.
Indeed, several machine learning tasks -- like clustering of data
sets or segmenting of pictures -- can actually be reduced to the
study of eigenvalues of the Laplacian on the associated graphs.
Since the pioneering investigations in~\cite{DonHof73,Fie73},
spectral methods on graphs  or manifolds associated with data sets
have become rather popular in computer science, cf.
e.g.~\cite{NgJorWei01,Lux07,GraPol10,Bol13}.

The $p$-Laplacians have recently aroused interest in applications to
computer science mostly because their lowest nontrivial eigenvalue
converges to the Cheeger constant as $p\to1$. This was shown in the
continuous case in the remarkable work of
{Kawohl/Fridman~\cite{KawFri03}, which was later proven by
B\"uhler/Hein \cite{BuhHei09} in the graph setting}. Hence, the
Cheeger constant can  be approximated by means of a sequence of
convex optimization problems.

As in the linear case, the known estimates for the $p$-Laplacian on
graphs are proven either for the normalized Laplacian, or else they
involve an upper bound on the vertex degree in the estimate. In the
second case, this leads to non-optimal estimates for finite graphs
that have only few vertices of very large degree, like real-life
scale-free networks. In the case of infinite graphs with unbounded
degree the estimates known so far even become trivial.

In this paper, we adapt the ideas of intrinsic metrics  from
\cite{BauKelWoj15} to the non-linear case of  $p$-Laplacians to
improve the estimates known so far. The techniques use a novel
definition of the boundary measure of a set. In particular, not only
the weight of an edge is taken into account but also its length.
This length stems from a metric whose $p^{*}$-norm, with
$1/p+1/p^{*}=1$, of the ``discrete gradient'' is less than one. In
the linear case it can be motivated by the distances attributed to a
diffusion on the graph, see \cite{Kel15}.  One instance  of such a
metric can be obtained involving the inverses of the vertex degrees,
see  Example~\ref{example}. From this perspective the vertex degrees
are part of the minimization itself and do not enter as a uniform
upper bound.

Our main perspective is  rather the one of pure mathematics, that is
we look for estimates of spectral quantities in terms on geometric
ones. Nevertheless, the  Cheeger constant defined by these novel
metrics might be of applicative interest on its own right, as it
encodes relevant geometric data of the underlying graph.

We also prove upper bounds for the first non-trivial eigenvalue.
Such estimates  are known as Buser inequalities in the case of
manifolds. Unlike in  the manifold case there is classically no
curvature notion whatsoever entering the upper estimate in the
graph case~\cite{AloMil85,Moh88}. However, we get an upper bound
involving a constant related to uniform discreteness of the space.
Indeed, it turns out that this estimate becomes often trivial when
the vertex degree is unbounded. This once again suggests that a
lower bound on the curvature in the case of manifolds corresponds to
an upper bound on the vertex degree for graphs.

We, furthermore, give an alternative proof of B\"uhler/Hein's
approximation result for finite graphs. Finally, we show an upper
bound for bottom of the spectrum in terms of exponential volume
growth of balls. Classically this is known as Brook's theorem
\cite{Bro81} and was shown for regular Dirichlet forms in
\cite{HaeKelWoj13} in the linear case (i.e., $p=2$). Our proof uses
again  an adaption of the concept of intrinsic metrics to the case
of general $p$.

The paper is structured as follows. In the next section we introduce
the set-up with all relevant quantities. This is followed by
Section~\ref{s:Cheeger} where we formulate and prove the Cheeger
inequalities. In Subsections~\ref{s:upper} and~\ref{s:convergence}
of Section~\ref{s:Cheeger} we prove upper bounds in the sense of a
Buser inequality and the convergence result in the case of finite
graphs. In Section~\ref{s:brooks} we prove the upper bounds by
exponential volume growth in the spirit of a Brooks-type theorem. Finally, in the appendix we discuss the interpretation of our variational results as estimates on the spectral gap of discrete $p$-Laplacians.

\section{Set up}
\subsection{Graphs and the energy functional}
Let $X$ be a discrete  countably finite or countably infinite set.
We denote the set of real-valued functions on $X$ by $C(X)$ and its
subset of finitely supported functions  by $C_{c}(X)$.

Let a symmetric function $b:X\times X\to[0,\infty)$ with zero
diagonal be given such that
\begin{align*}
    \sum_{y\in X}b(x,y)<\infty,\qquad x\in X.
\end{align*}
Such a function has an interpretation in classical graph theory: the
elements of $X$ are vertices and two vertices $x,y\in X$ are
connected by an edge with weight $b(x,y)$ if and only $b(x,y)>0$; we
write $x\sim y$ in this case.

Let $m:X\to(0,\infty)$ be a function which extends to a measure via
additivity.  Moreover, we denote the spaces of $p$-summable real
valued functions on $X$ with respect to the measure $m$ by
$\ell^{p}(X,m)$ and the corresponding norm by $\|\cdot\|_{m,p}$,
$p\in[1,\infty)$. The dual pairing for functions $f\in\ell^{p}(X,m)$
and $g\in \ell^{q}(X,m)$ with $p,q\in(1,\infty)$ and $1/p+1/q=1$ is
given by
\begin{align*}
    \langle{f},{g}\rangle_{m}:=\sum_{X}fgm=\sum_{x\in
    X}f(x)g(x)m(x).
\end{align*}

Following Nakamura/Yamasaki  \cite{NakYam76}, for $p\in[1,\infty)$,
we introduce a convex energy functional
$\mathcal{E}_{p}:C(X)\to[0,\infty]$ by
\begin{align*}
    \Efun_p(f)
    :=\frac{1}{2}\sum_{x,y\in X}b(x,y)|f(x)-f(y)|^{p}
    ,\qquad f\in C(X)\ .
\end{align*}

In this paper we are interested in giving estimates on the quantity
\begin{align*}
    \lmin:=
    \inf_{0\neq\ph\in C_{c}(X)}\frac{\Efun_{p}(\ph)}{\|\ph\|_{m,p}^{p}}.
\end{align*}
and in the case $m(X)<\infty$
\begin{align*}
\lmax&:=\inf_{\mathrm{const}\neq f\in
\ell^{p}(X,m)}\frac{\Efun_{p}(f)}{\inf_{\gm\in\R}\|f-\gm\|_{m,p}^{p}}.
\end{align*}
 The latter is of
course in particular relevant in the case of finite graphs.

\subsection{A non-linear generalization of intrinsic metrics}
In recent years the notion of intrinsic metrics has been developed
for graphs; it has various strong applications for the case $p=2$.
Here we are going to extend it to general $p$. For our purposes we
do not need to enforce the triangle inequality: This suggests {to
introduce} the set
\begin{align*}
R_{p}(b,m):=\{&d:X\times X\to[0,\infty)
    \mid\mbox{symmetric and }\\
    &\sum_{y\in X}b(x,y)d(x,y)^{p/(p-1)}\leq m(x) \mbox{, for all } x\in X\}
\end{align*}
{for $p\in (1,\infty)$ and
\begin{align*}
R_{1}(b,m):=\{&d:X\times X\to[0,\infty)  \mid\mbox{symmetric and
}d(x,y)\leq 1 \mbox{ if }x\sim y\}
\end{align*}
for $p=1$.}

Let  us give two examples: on one hand we show that $R_{p}(b,m)$
does indeed contain non-vanishing functions for any graph structure
$b$ and any measure $m$; on the other hand we show how to embed a
classical object of the literature -- the combinatorial graph
distance -- in our theoretical setting.

\begin{eg}\label{example}
(a) For a given graph $b$ over $(X,m)$ and $p\in(1,\infty)$ we
define a function $d_{p}:X\times X\to[0,\infty)$ by
\begin{align*}
    d_{p}(x,y):=(\mathrm{Deg}(x)\vee\mathrm{Deg}(y))^{-(p-1)/p},\quad
    x,y\in X,
\end{align*}
where $\mathrm{Deg}(x)$ is the weighted degree of the vertex $x\in
X$ with respect to $b$ and $m$ given by
\begin{align*}
    \mathrm{Deg}(x)=\frac{1}{m(x)}\sum_{y\in X}b(x,y),\quad x\in X.
\end{align*}
The function $d_p$ can be seen to be in  $R_{p}(b,m)$ for any $b $
and $m$ by direct calculations. Furthermore, we can easily construct
a pseudo metric from $d_{p}$ by considering the associated path
metric.

(b) Let a graph $b$ over $X$ be given. In the case where $m$ is
chosen to be the \emph{normalizing measure}, i.e.,
\begin{align*}
    m(x)=\sum_{y\in X}b(x,y),
\end{align*}
{and, hence, $\mathrm{Deg}\equiv 1$, then} the combinatorial graph
distance $d_{\mathrm{comb}}$ defines a function in $R_{p}(b,m)$ for
all $p\ge 1$. This is case most usually considered in the
literature, see e.g. \cite{Amg03, Tak03}.
\end{eg}


\begin{rem}\label{rem:tak}
For $p=2$, a function in $R_{p}(b,m)$ that is additionally a pseudo
metric, i.e., satisfies the triangle inequality, is called an
\emph{intrinsic metric}. This concept was first studied
systematically for general regular Dirichlet forms by
Frank/Lenz/Wingert in  \cite{FraLenWin14} and used since then in
various contexts, see
e.g.~\cite{BauKelWoj15,Fol11,GriHuaMas12,HuaKelMas13}. Here, we
replace $2$ by $q=p/(p-1)$ which is the conjugate of $p$, i.e.,
$1/p+1/q=1$.
\end{rem}

\section{Cheeger inequalities}\label{s:Cheeger}
We start by defining the isoperimetric constants with the novel
definition of  the boundary measure of a set. This is inspired by
\cite{BauKelWoj15} where this was used in the case $p=2$.

Below, we introduce the isoperimetric constants $\hmin$ and $\hmax$
which are each tailored for the quantities $\lmin$  and $\lmax$.  We
then state our main results: Theorem~\ref{t:main1} for $\lmin$ and
Theorem~\ref{t:main2} for   $\lmax$. The latter result is in
particular relevant for finite graphs.

For the proof we show an abstract Cheeger estimate,
Theorem~\ref{t:isoperimetric_estimate}, from which we derive both
Theorem~\ref{t:main1} and Theorem~\ref{t:main2}.

Afterwards, we proceed by showing upper bounds for  the quantities
$\lmin$ and $\lmax$ in terms of $\hmin$ and $\hmax$ and a uniform
discreteness constant. Finally, we turn to finite graphs and give an
alternative proof of the convergence of $\lmax$ to $\lmaxq_{1}$ as
$p\to1$.

\subsection{Isoperimetric constant and results}
\label{s:isoperimetric_constant}
In this subsection we define the isoperimetric constants and state
the results.

Let $W\subseteq X$. We define the \emph{boundary} $\partial W$ of
the set $W$ by
\begin{align*}
\partial W:= W\times( X\setminus W).
\end{align*}
 The \emph{measure of the boundary} with respect to a function
$w:X\times X\to[0,\infty)$ is defined as
$${|\partial W|_{w}}:=w(\partial W)=\sum_{(x,y)\in \partial W}w(x,y).$$

Whenever a graph structure on $X$ given by some $b$ along with a
measure $m$ is considered, this definition will be used with $w=bd$,
where $d$ is a function in $R_{p}(b,m)$. In this case the sum over
$\partial W$ above is effectively only over the edges leaving $W$.
This definition of the measure of the boundary generalizes the
classical theory which considers $w=b$, i.e., $d=1$, only. This
generalization is the key idea so that we do not have to impose any
boundedness assumptions, neither by assuming bounded weighted vertex
degree nor by restricting ourselves to the case of the normalizing
measure (cf. Example~\ref{example}).

 We define for $p\in[1,\infty)$ the
$p$-\emph{isoperimetric numbers} $\hmin$ and $\hmax$ by
\begin{align*}
\hmin:=\sup_{d\in R_{p}(b,m)}h^{(0)}(d)\quad\mbox{with}\quad
h^{(0)}(d):=\inf_{W\subseteq X \mbox{{ \scriptsize
finite}}}\frac{{|\partial W|_{bd}}}{m(W)}.
\end{align*}
and, only in the case $m(X)<\infty$,
\begin{align*}
\hmax&:=\sup_{d\in R_{p}(b,m)}h^{(1)}(d)\quad\mbox{with}\quad
h^{(1)}(d):= \inf_{\substack{W\subseteq X \\
m(W)\leq {m(X)}/{2}}}\frac{{|\partial W|_{bd}}}{m(W)}.
\end{align*}
In the case of finite graphs one always has $\hmin=0$ since one can
choose $W=X$ in the definition of $h^{(0)}(d)$. Furthermore,
choosing $m$ as the normalizing measure, see
Example~\ref{example}.(b), the constants $\hmin$ and respectively
$\hmax$ agree with the classical Cheeger constants in the case of
infinite and respectively finite graphs.

Having introduced the relevant quantities we are in the position to
state {our main results}. These are two Cheeger-type inequalities
relating the isoperimetric numbers and the spectral gaps.

\begin{thm}\label{t:main1}
For all $p\in(1,\infty)$,
\begin{align*}
\frac{2^{p-1}}{p^{p}}{\hmin}^{p}\leq     \lmin.
\end{align*}
\end{thm}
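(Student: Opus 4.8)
The plan is to fix a single admissible $d\in R_{p}(b,m)$ and a nonzero $\ph\in C_{c}(X)$, establish the estimate $\Efun_{p}(\ph)\ge \frac{2^{p-1}}{p^{p}}\,h^{(0)}(d)^{p}\,\|\ph\|_{m,p}^{p}$, and only at the very end take the supremum over $d$ and the infimum over $\ph$. The two engines are a discrete co-area (layer-cake) identity, which turns the level-set structure of $\ph$ into the boundary measure, and a sharp pointwise inequality, which lets us pass from $\big||\ph(x)|^{p}-|\ph(y)|^{p}\big|$ back to $|\ph(x)-\ph(y)|^{p}$.

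First I would record the co-area identity: for any symmetric weight $w\ge 0$ and any finitely supported $f\ge 0$,
$$\frac12\sum_{x,y\in X}w(x,y)\,|f(x)-f(y)|=\int_{0}^{\infty}|\partial\{f>t\}|_{w}\,dt,$$
which follows by writing $|f(x)-f(y)|=\int_{0}^{\infty}\big|\mathbf{1}_{\{f>t\}}(x)-\mathbf{1}_{\{f>t\}}(y)\big|\,dt$ and interchanging sum and integral. Applying this with $w=bd$ and $f=|\ph|^{p}$ (whose super-level sets are finite since $\ph\in C_{c}(X)$), using $|\partial W|_{bd}\ge h^{(0)}(d)\,m(W)$ for every finite $W$ together with $\int_{0}^{\infty}m(\{|\ph|^{p}>t\})\,dt=\|\ph\|_{m,p}^{p}$, yields
$$h^{(0)}(d)\,\|\ph\|_{m,p}^{p}\le \frac12\sum_{x,y\in X}b(x,y)\,d(x,y)\,\big||\ph(x)|^{p}-|\ph(y)|^{p}\big|.$$

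Next I would bound the right-hand side from above. The crucial elementary inequality is that for all $a,c\ge 0$ and $p\in(1,\infty)$,
$$|a^{p}-c^{p}|\le p\,|a-c|\left(\frac{a^{p}+c^{p}}{2}\right)^{(p-1)/p};$$
this is sharp (an equality to second order as $a\to c$) and is exactly what produces the constant $2^{p-1}/p^{p}$, whereas the cruder $|a^{p}-c^{p}|\le p\max(a,c)^{p-1}|a-c|$ only yields $p^{-p}$. Inserting it with $a=|\ph(x)|$, $c=|\ph(y)|$, using $\big||\ph(x)|-|\ph(y)|\big|\le|\ph(x)-\ph(y)|$, and applying Hölder with exponents $p$ and $q=p/(p-1)$ (splitting $bd=b^{1/p}\cdot b^{1/q}d$), the first factor becomes $(2\Efun_{p}(\ph))^{1/p}$, while in the second factor the exponent collapses because $\frac{p-1}{p}q=1$, leaving $\sum_{x,y}b\,d^{q}\,\frac{|\ph(x)|^{p}+|\ph(y)|^{p}}{2}=\sum_{x}|\ph(x)|^{p}\sum_{y}b(x,y)d(x,y)^{p/(p-1)}\le\|\ph\|_{m,p}^{p}$ by the defining constraint of $R_{p}(b,m)$. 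This gives $\frac12\sum b\,d\,\big||\ph(x)|^{p}-|\ph(y)|^{p}\big|\le\frac{p}{2}(2\Efun_{p}(\ph))^{1/p}\|\ph\|_{m,p}^{p-1}$.

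Combining the two displays, dividing by $\|\ph\|_{m,p}^{p-1}$, and raising to the $p$-th power gives $h^{(0)}(d)^{p}\|\ph\|_{m,p}^{p}\le\frac{p^{p}}{2^{p-1}}\Efun_{p}(\ph)$, i.e. the claimed estimate for fixed $d$. Taking the supremum over $d\in R_{p}(b,m)$ is legitimate since the left-hand side is independent of $d$ and $t\mapsto t^{p}$ is increasing, so $h^{(0)}(d)$ becomes $\hmin$; taking the infimum over $0\neq\ph\in C_{c}(X)$ then turns $\Efun_{p}(\ph)/\|\ph\|_{m,p}^{p}$ into $\lmin$, giving the theorem. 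The main obstacle is isolating and verifying the sharp pointwise inequality: the weaker, more obvious estimates lose precisely the factor $2^{p-1}$, so the entire improvement hinges on the correct power-mean version, whose verification I expect to be the only genuinely delicate computation.
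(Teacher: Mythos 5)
Your argument is correct and is essentially the proof given in the paper: the authors route the same computation through an abstract estimate (Theorem~\ref{t:isoperimetric_estimate}) combining the co-area and area formulas (Lemmas~\ref{l:coarea} and~\ref{l:area}), the pointwise inequality you isolate (which is exactly Lemma~\ref{l:f^p}, due to Amghibech), and the same H\"older split with the $R_{p}(b,m)$ constraint absorbing the weight $\sum_{y}b(x,y)d(x,y)^{p/(p-1)}\leq m(x)$, before taking the supremum over $d$ and the infimum over $\ph$. The only piece you defer, the verification of $|a^{p}-c^{p}|\le p\,|a-c|\bigl(\tfrac{a^{p}+c^{p}}{2}\bigr)^{(p-1)/p}$, is indeed the delicate step and is proved in the paper via Jensen's inequality and a convexity argument.
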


For $p\in [1,\infty)$, let
\begin{align*}
    D_{p}
    =\{f\in C(X)\mid \mathcal{E}_{p}(f)<\infty\}\cap
    \ell^{p}(X,m).
\end{align*}

We say a function $f\in D_{p}$ is a \emph{weak solution} for $\lm$
if for all $g\in D_{p}$
\begin{equation}\label{eq:weaksol}
\begin{split}
\mathcal{E}_{p}'(f)g&:=\frac{1}{2}\sum_{x,y\in
X}b(x,y)|f(x)-f(y)|^{p-2}(f(x)-f(y))(g(x)-g(y))\\
&=\lm\sum_{ X}{|f|^{p-2}}f {g}{m}.
\end{split}
\end{equation}

\begin{thm}\label{t:main2}
Let $m(X)<\infty$. Let there be a non-constant weak solution $f\in
D_{p}$ for $\lmax$  for some $p\in(1,\infty)$. Then,
\begin{align*}
     \frac{2^{p-1}}{p^{p}} {\hmax}^{p}\leq \lmax.
\end{align*}
\end{thm}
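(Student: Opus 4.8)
The plan is to reduce Theorem~\ref{t:main2} to the abstract Cheeger estimate announced as Theorem~\ref{t:isoperimetric_estimate}, which in the form I need reads: for every $d\in R_{p}(b,m)$ and every $u\colon X\to[0,\infty)$ with $m(\supp u)\le m(X)/2$ one has
\begin{equation*}
\frac{2^{p-1}}{p^{p}}\,h^{(1)}(d)^{p}\,\|u\|_{m,p}^{p}\le\Efun_{p}(u).
\end{equation*}
Granting this, the whole task becomes to manufacture from the weak solution $f$ a single nonzero $u\ge 0$, supported on a set of measure at most $m(X)/2$, with $\Efun_{p}(u)\le\lmax\|u\|_{m,p}^{p}$. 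Indeed, combining the two inequalities, cancelling $\|u\|_{m,p}^{p}>0$, and taking the supremum over $d\in R_{p}(b,m)$ yields $\frac{2^{p-1}}{p^{p}}\hmax^{p}\le\lmax$. (The degenerate case $\lmax=0$ forces, via $\Efun_{p}(f)=0$, a connected component $W$ with $m(W)\le m(X)/2$ and $|\partial W|_{bd}=0$, so $\hmax=0$ and the inequality is trivial; hence I assume $\lmax>0$.)

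The function $u$ is one of the two parts $f_{+},f_{-}$ of $f$, and two observations make this work. First, testing \eqref{eq:weaksol} with the admissible constant $g=\bone\in D_{p}$ kills the left-hand side and gives $\lmax\sum_{X}|f|^{p-2}f\,m=0$; since $\lmax>0$, this forces $\sum_{X}|f|^{p-2}f\,m=0$, so $f$ genuinely changes sign and both $f_{+},f_{-}$ are nonzero. Their supports $\{f>0\}$ and $\{f<0\}$ being disjoint, at least one has measure at most $m(X)/2$; I take $u$ to be that (nonzero) part. Second, testing \eqref{eq:weaksol} with $g=f_{+}$ (respectively $g=\min(f,0)$ for the negative part) makes the right-hand side collapse \emph{exactly} to $\lmax\|u\|_{m,p}^{p}$, since $|f|^{p-2}f\,f_{+}=f_{+}^{p}$ pointwise and symmetrically in the other case, while the left-hand side is bounded below by $\Efun_{p}(u)$ through the pointwise convexity inequality
\begin{equation*}
|a-b|^{p-2}(a-b)(a_{+}-b_{+})\ge |a_{+}-b_{+}|^{p},\qquad a,b\in\R,
\end{equation*}
checked by distinguishing the signs of $a,b$ (and applied to $-f$ for the negative part). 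This yields $\Efun_{p}(u)\le\lmax\|u\|_{m,p}^{p}$, completing the reduction.

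I expect the real difficulty to sit in the abstract estimate rather than in this reduction. Its proof should run through a discrete co-area (layer-cake) identity
\begin{equation*}
\sum_{x,y\in X}b(x,y)d(x,y)\,|v(x)-v(y)|=2\int_{0}^{\infty}|\partial\{v>t\}|_{bd}\,dt
\end{equation*}
applied to $v=u^{p}$, whose superlevel sets all lie in $\supp u$ and so have measure at most $m(X)/2$; then $|\partial\{v>t\}|_{bd}\ge h^{(1)}(d)\,m(\{v>t\})$ makes the right-hand side at least $2h^{(1)}(d)\|u\|_{m,p}^{p}$. The left-hand side is estimated by Hölder with exponents $p$ and $p/(p-1)$, spending the defining bound $\sum_{y}b(x,y)d(x,y)^{p/(p-1)}\le m(x)$ of $R_{p}(b,m)$ on the second factor. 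The crux—and the source of the sharp constant $2^{p-1}$—is to split $|u(x)^{p}-u(y)^{p}|$ using the symmetric power-mean weight $\big(\tfrac{u(x)^{p}+u(y)^{p}}{2}\big)^{1/p}$ rather than $\max\{u(x),u(y)\}$, which rests on the elementary inequality
\begin{equation*}
|a^{p}-b^{p}|\le p\,|a-b|\Big(\tfrac{a^{p}+b^{p}}{2}\Big)^{1-1/p},\qquad a,b\ge 0 .
\end{equation*}
Its verification (the inequality is tight as $b\to a$ and as $p\to 1$) I anticipate to be the main analytic obstacle, since the naive bound $\max\{a,b\}^{p}\le a^{p}+b^{p}$ would cost a factor $2^{p-1}$ and reach only the weaker constant $p^{-p}$.
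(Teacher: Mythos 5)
Your proposal is correct and follows essentially the same route as the paper: test the weak formulation with $g=\bone$ to force a sign change, pick the part $f_{\pm}$ supported on a set of measure at most $m(X)/2$, control $\Efun_p(f_+)\le\mathcal{E}_p'(f)f_+=\lmax\|f_+\|_{m,p}^p$ via the pointwise inequality $|r_+-s_+|^p\le|r-s|^{p-2}(r-s)(r_+-s_+)$, and feed this into the abstract isoperimetric estimate (whose proof is indeed co-area formula plus Lemma~\ref{l:f^p} plus H\"older, exactly as you anticipate). Your explicit treatment of the degenerate case $\lmax=0$ is a small point of extra care compared with the paper, which simply infers $\lmax\neq 0$ from non-constancy; otherwise the two arguments coincide.
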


\begin{rem}\label{rem:BH}
(a) Using the Fr\'{e}chet differentiability of $\mathcal{E}_p $ on suitable Banach spaces, see e.g.  \cite{Mug13}, one can prove the following:
In the case  weak solutions for $\lmax $ exist, they are exactly the minimizers of the functional $\mathcal E_p$.
This was already  shown in  in~\cite[Thm.~3.1]{BuhHei09} for finite graphs. Indeed, $ \lmin$ and respectively $ \lmax$ can be understood as the bottom of the spectrum and respectively first non-trivial eigenvalue of  $p$-Laplacian under Dirichlet and respectively Neumann conditions. These $p$-Laplacians are restrictions of the general $p$-Laplacian that is introduced in the appendix.

(b) Theorem~\ref{t:main2}  can be seen as a generalization of the
corresponding estimates in~\cite{Amg03,Tak03} and \cite{BuhHei09} in
the case of finite graphs. For $b$ over $(X,m)$ let the classical
Cheeger constant be given by
\begin{align*}
 h:=\inf_{m(W)\leq m(X/2)}\frac{|\partial W|_{b}}{m(W)}.
\end{align*}
In~\cite{Amg03,Tak03} the case
\begin{itemize}
\item $b:X\times X\to\{0,1\}$,
\item $m(x)= \sum_{y\in X}b(x,y)=\#\{y\in X\mid x\sim y\}$, $x\in
X$,
\end{itemize}
was considered and the bound
\begin{align*}
2^{p-1}\left({\frac{h}{p}}\right)^{p}\leq \lmax
\end{align*}
 was obtained.
This is a special case of Theorem~\ref{t:main2}  since
$h=h^{(1)}(d_{\mathrm{comb}})$ with $d_{\mathrm{comb}}$ being the
combinatorial graph metric   which is in $R_{p}(b,m)$ for $b$ and
$m$ as chosen above.

Furthermore, for
\begin{itemize}
\item $b:X\times X\to\{0,1\}$,
\item $m\equiv 1$
\end{itemize}
we improve the bound in  \cite[Theorem~4.3]{BuhHei09}, where the
inequality
\begin{align*}
\left({ \frac{2}{M}}\right)^{p-1}\left({\frac{h}{p}}\right)^{p}\leq
\lmax
\end{align*}
was proven with $M:=\sup_{x\in X}\#\{y\in X\mid x\sim y\}$. Observe
that the combinatorial graph metric is not in $R_{p}(b,1)$ apart
from the trivial case of a graph consisting of isolated vertices and
edges. To see that our estimate is  sharper, one can choose the
weight $d_p(x,y):=(\Deg(x)\vee\Deg(y))^{-(p-1)/p}$ from
Example~\ref{example}. In Example~\ref{example2} below we give
explicit constructions of graphs where our estimate is seen to be
significantly sharper than the one of  \cite{BuhHei09}.
\end{rem}

\begin{eg}\label{example2} We consider
a finite $k$-regular graph, i.e.,  $b_{0}:X\times X\to\{0,1\}$ such
that $\sum_{y\in X}b(x,y)=\#\{y\sim x\}=k$ for all $x\in X$.
Furthermore, let $m\equiv 1$. Let $W_{0}$ be a set that minimizes
\[
h_{0}:=\inf_{\#W\leq \# X/2}\frac{|\partial W|_{b_{0}}}{\#W}
\]
and set $N_{0}:=\#W_{0}-1$. We may assume that the graph is such
that $N_{0}\ge k$.

Now, we let $b$ be the graph over $X$ which  has the edges of
$b_{0}$ and we choose an arbitrary vertex $w\in W_{0}$ and connect
it to every other vertex in $W_0$ by an edge.

Obviously, the Cheeger constant $h:=\inf_{\#W\leq \#
X/2}\frac{|\partial W|_{b}}{\#W}$ of the new graph $b$ equals
$h_{0}$ and $W_{0}$ is still a minimizing set since $|\partial
W|_{b}\ge |\partial W|_{b_{0}}$  for any $W\subseteq X$  and
$|\partial W_{0}|_{b}= |\partial W_{0}|_{b_{0}}$. In other words
 the \emph{conductivity} $\frac{|\partial
W|_{b}}{m(W)}$ of a set $W$ depends on the connectivity to its
complement $X\setminus W$ as well as to its measure, but not on the
internal structure of $W$. So, \cite{BuhHei09} yields the estimate
\begin{align*}
\left({\frac{2}{N_{0}}}\right)^{p-1}\left({\frac{h}{p}}\right)^{p}\leq
\lmax.
\end{align*}

To compare this to our estimate, we choose the function
$d_{0}:=bk^{-1/q}$ with $q=p/(p-1)$. It is easy to see that
$d_{0}\in R_{p}(b,m)$ and $|\partial W|_{bd_{0}}=k^{-1/p^*}|\partial
W|_{b_{0}}$. Hence, $\hmaxq(d_{0})=h_{0}=h$ and the estimate
\begin{align*}
\frac{2^{p-1}}{k^{1/q}}\left({\frac{h }{p}}\right)^{p}\leq \lmax
\end{align*}
improves the bound of  \cite{BuhHei09} above by a factor
$k^{1/q}/N_{0}^{p-1}$.

Of course, the choice of $d_{0}$ above required a rather detailed
knowledge of the structure of the graph. Below we show that even
for the generic function $d$ from Example~\ref{example}.(a) the bound is
improved.

So, consider the function from Example~\ref{example}.(a)
\[
d(x,y)=(\mathrm{Deg}(x)\vee\mathrm{Deg}(y))^{-1/q}
\]
with $q=p/(p-1)$ and $\mathrm{Deg}$ given by
$\mathrm{Deg}(x)=\sum_{y\in X}b(x,y)$ in the case $m\equiv 1$. Then,
for any $W\subseteq X$ with $w\in W$ we have
\begin{align*}
|\partial W|_{bd}&=\sum_{(x,y)\in W\times X\setminus W, x\neq w}
bd(x,y)+\sum_{y\in  X\setminus W}bd(w,y)\\
&=(k+1)^{-1/q}\hspace{-1cm}\sum_{(x,y)\in W\times X\setminus W,
x\neq w}
b(x,y)+(k+N_{0})^{-1/q}\sum_{y\in  X\setminus W}b(w,y)\\
&\ge (k+1)^{-1/q}(|\partial W|_{b_{0}}-k+1)
\end{align*}
for any $W\subseteq X$ with $w\in X\setminus W$, we have analogously
\begin{align*}
|\partial W|_{bd} &\ge (k+1)^{-1/q}(|\partial W|_{b_{0}}-k+1).
\end{align*}
Hence, with $c=(k+1)^{-1/q}(k-1)$ we obtain
\begin{align*}
    \hmax(d)=\inf_{\#W\leq \# X/2}\frac{|\partial W|_{bd}}{\#W}
    \geq {(k+1)^{-1/q}}
    \inf_{\#W\leq \# X/2}\frac{|\partial
    W|_{b}-c}{\#W}.
\end{align*}
So, whenever we have a graph where $N_{0}=\#W_{0}-1$  is
significantly larger than $(k+1)^{1/q}h_{0}^{-{1}}$, then
$\hmaxq{}(d)$ is  close to $(k+1)^{-1/q}h_{0}$. In such cases our
estimate is still significantly better than the one of
\cite{BuhHei09} above, namely by the factor $k^{1/q}/N_{0}^{p-1}$.
\end{eg}

\subsection{A general isoperimetric inequality}
In this subsection we prove a general isoperimetric inequality from
which we will deduce  Theorem~\ref{t:main1} and Theorem~\ref{t:main2}.

Let $p\in [1,\infty)$ be given. We extend as usual a symmetric
function $w:X\times X\to[0,\infty)$ to a measure   by
$$w(U):=\sum_{(x,y)\in U\times U}w(x,y),\qquad U\subseteq X\times X\ .$$
Furthermore, let $m:X\to(0,\infty)$ be given. Moreover, for a linear
subspace $G\subseteq C(X)$, we let
 $$Y_{G}=\{ \Omega_{t}{(|f|^{p})}\mid f\in G, t>0\}, $$
 where,   for a function $g\in C(X)$, the level sets
 $\Omega_{t}(g)$ are given by
\begin{align*}
    \Omega_{t}{(g)}:=\{x\in X\mid g(x)>t\}.
\end{align*}

Given the  ingredients $p$, $w$, $m$ and $G$, we define a general
isoperimetric constant
\begin{align*}
    h_{p,w,m,G}:=
    \inf_{\substack{W\in Y_G\\ m(W)<\infty}} \frac{w(\partial W)}{m(W)}.
\end{align*}

For example if $G=C_{c}(X)$,  then $Y_{C_{c}(X)}$ consists of all
finite sets. Furthermore, for $G=\ell^{p}(X,m)$, $p\in[1,\infty)$,
the set $Y_{\ell^{p}(X,m)}$ is  the set of all finite measure
subsets of $X$.

\begin{thm}\label{t:isoperimetric_estimate}
Let $p\in(1,\infty)$, $b$ be a graph over $(X,m)$, $w,\sigma:X\times
X\to[0,\infty)$ such that $w\leq b\sigma$, $G\subseteq C(X)$ and
\[
k(x):=\sum_{y\in X}(b\sigma^{p/(p-1)})(x,y),\qquad x\in X.
\]
Then, for all  $g\in G\cap\ell^{p}(X,m)$
\begin{align*}
{ \frac{ 2^{p-1}}{{p^{p}}}  { h_{p,w,m,G}^{p}} \|g\|_{m,p}^{p^{2}}}
\leq \|g\|_{k,p}^{p(p-1)} \Efun_{p}(g),
\end{align*}
where both sides may take the value $+\infty$.
\end{thm}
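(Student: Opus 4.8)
The plan is to run a co-area (layer-cake) decomposition of $F:=|g|^{p}$ and feed it into the definition of $h_{p,w,m,G}$: integrating the elementary isoperimetric inequality over all level sets turns the constant $h_{p,w,m,G}$ into the weighted difference sum $\tfrac{1}{2}\sum_{x,y}w(x,y)\big||g(x)|^{p}-|g(y)|^{p}\big|$, which I would then bound by $\Efun_{p}(g)$ and $\|g\|_{k,p}$ through one sharp scalar inequality and a single application of Hölder. Throughout set $q:=p/(p-1)$.

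First I would record the two layer-cake identities for a nonnegative $F\in C(X)$. Since all terms are nonnegative, Tonelli applies and
\[
\int_{0}^{\infty} m\big(\Omega_{t}(F)\big)\,dt=\sum_{x\in X}m(x)\,F(x),
\]
while, observing that $(x,y)\in\partial\Omega_{t}(F)$ precisely when $F(y)\le t<F(x)$ and using symmetry of $w$,
\[
\int_{0}^{\infty} w\big(\partial\Omega_{t}(F)\big)\,dt=\sum_{x,y\in X}w(x,y)\,\big(F(x)-F(y)\big)_{+}=\frac{1}{2}\sum_{x,y\in X}w(x,y)\,|F(x)-F(y)|.
\]
Specializing to $F=|g|^{p}$, the first identity gives $\|g\|_{m,p}^{p}$. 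For each $t>0$ the set $\Omega_{t}(|g|^{p})$ lies in $Y_{G}$ because $g\in G$, and it has finite measure $m(\Omega_{t}(|g|^{p}))\le t^{-1}\|g\|_{m,p}^{p}$ by Chebyshev; hence $w(\partial\Omega_{t}(|g|^{p}))\ge h_{p,w,m,G}\,m(\Omega_{t}(|g|^{p}))$ for every $t>0$. Integrating over $t\in(0,\infty)$ I obtain
\[
h_{p,w,m,G}\,\|g\|_{m,p}^{p}\le \frac{1}{2}\sum_{x,y\in X}w(x,y)\,\big||g(x)|^{p}-|g(y)|^{p}\big|.
\]

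Next I would discard $w$ in favour of $b\sigma$ via $w\le b\sigma$ and apply the scalar inequality
\[
|a^{p}-b^{p}|\le p\left(\frac{a^{p}+b^{p}}{2}\right)^{1/q}|a-b|,\qquad a,b\ge 0,
\]
with $a=|g(x)|$, $b=|g(y)|$. This bounds the right-hand side by $\tfrac{p}{2}\sum_{x,y}b(x,y)\sigma(x,y)\big(\tfrac{|g(x)|^{p}+|g(y)|^{p}}{2}\big)^{1/q}|g(x)-g(y)|$. Writing each summand as the product of $b^{1/q}\sigma\big(\tfrac{|g(x)|^{p}+|g(y)|^{p}}{2}\big)^{1/q}$ and $b^{1/p}|g(x)-g(y)|$ and using Hölder with exponents $q$ and $p$, the first factor yields $\big(\sum_{x,y}b\sigma^{q}\tfrac{|g(x)|^{p}+|g(y)|^{p}}{2}\big)^{1/q}=\|g\|_{k,p}^{p/q}$ (here the symmetry of $b,\sigma$ and the definition of $k$ are used), and the second yields $\big(\sum_{x,y}b|g(x)-g(y)|^{p}\big)^{1/p}=(2\Efun_{p}(g))^{1/p}$. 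Collecting the constants, using $\tfrac{p}{2}\cdot 2^{1/p}=p\,2^{-1/q}$ and $p/q=p-1$, gives
\[
h_{p,w,m,G}\,\|g\|_{m,p}^{p}\le p\,2^{-1/q}\,\|g\|_{k,p}^{p-1}\,\Efun_{p}(g)^{1/p}.
\]
Raising to the $p$-th power, substituting $2^{-p/q}=2^{-(p-1)}$, and rearranging produces the asserted inequality; since every step is an inequality in $[0,\infty]$, the degenerate cases where a side equals $+\infty$ are automatically included.

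I expect the only genuinely delicate point to be the scalar inequality above, since the whole gain encoded in the factor $2^{p-1}$ rests on its constant being exactly $p$. Dividing by $a^{p}$ and setting $t=b/a\in[0,1)$, it is equivalent to $(1-t^{p})\le p\big(\tfrac{1+t^{p}}{2}\big)^{1/q}(1-t)$, i.e.\ to showing that $t\mapsto(1-t^{p})\big/\big[(\tfrac{1+t^{p}}{2})^{1/q}(1-t)\big]$ is bounded on $[0,1)$ by its limit $p$ as $t\to1$. For $p=2$ this collapses to $(a-b)^{2}\ge0$, and for general $p$ a short one-variable calculus estimate (checking that the endpoint $t\to1$ dominates the interior and the endpoint $t=0$, where the value is $2^{1/q}\le p$) closes it. Everything else — the layer-cake identities and the Hölder splitting — is routine once the correct product decomposition is fixed.
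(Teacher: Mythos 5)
Your proposal is correct and follows essentially the same route as the paper's proof: area and co-area formulas applied to $|g|^{p}$ to convert $h_{p,w,m,G}\|g\|_{m,p}^{p}$ into $\tfrac12\sum_{x,y}w(x,y)\bigl||g(x)|^{p}-|g(y)|^{p}\bigr|$, then the scalar inequality $|a^{p}-b^{p}|\le p\bigl(\tfrac{a^{p}+b^{p}}{2}\bigr)^{(p-1)/p}|a-b|$ together with $w\le b\sigma$, and finally H\"older with exponents $q$ and $p$ using the product split $b^{1/q}\sigma(\cdot)^{1/q}\cdot b^{1/p}|g(x)-g(y)|$; your Chebyshev remark guaranteeing $m(\Omega_{t}(|g|^{p}))<\infty$ is a detail worth keeping, as it is needed for these level sets to compete in the infimum defining $h_{p,w,m,G}$. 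The one place you should tighten is the scalar inequality itself: checking the two endpoints $t=0$ and $t\to1$ does not by itself control the interior, and the paper proves it (Lemma~\ref{l:f^p}, due to Amghibech) by Jensen's inequality for $x\mapsto x^{p/(p-1)}$ followed by a convexity (chord) bound for $\int_{a}^{b}t^{p}\,dt$ — a short argument you should reproduce rather than defer to an unverified calculus claim.
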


The inequality in Theorem~\ref{t:isoperimetric_estimate} bears some
resemblance to the interpolation inequality of Gagliardo/Nirenberg
on domains. Its proof is based on a co-area formula and the area
formula (or Cavalieri's principle). For a proof of the following two
lemmata see \cite[Theorem~12 and~13]{KelLen10}.

\begin{lemma}[Co-area formula]\label{l:coarea}
Let $w:X\times X\to[0,\infty)$ and $f:X\to[0,\infty)$. Then,
\begin{align*}
\frac{1}{2}\sum_{x,y\in X}w(x,y)|f(x)-f(y)| =
\int_{0}^{\infty}w(\partial\Omega_{t}{(f)})dt,
\end{align*}
where both sides may take the value $\infty$.
\end{lemma}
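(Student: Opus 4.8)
The plan is to establish the identity first for a single ordered pair $(x,y)\in X\times X$ and then to sum over all pairs, interchanging the sum with the integral over $t$ by Tonelli's theorem. Everything in sight is nonnegative, so no integrability hypothesis is needed and both sides are permitted to equal $+\infty$ together. Throughout I use that $w$ is symmetric, which is the standing assumption on boundary weights in this setting.

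The starting point is the elementary observation that, for fixed $(x,y)$, one has $(x,y)\in\partial\Omega_t(f)=\Omega_t(f)\times(X\setminus\Omega_t(f))$ precisely when $x\in\Omega_t(f)$ and $y\notin\Omega_t(f)$, that is, when $f(y)\le t<f(x)$. Since $f\ge 0$, the set of parameters $t\in[0,\infty)$ for which this holds is the half-open interval $[f(y),f(x))$ if $f(x)>f(y)$ and is empty otherwise. Writing $\bone_{\partial\Omega_t(f)}$ for the indicator of the boundary set and $(\cdot)_+$ for the positive part, this yields the single-pair identity
\begin{align*}
\int_0^\infty \bone_{\partial\Omega_t(f)}(x,y)\,dt=\big(f(x)-f(y)\big)_+,
\end{align*}
the map $t\mapsto\bone_{\partial\Omega_t(f)}(x,y)$ being the indicator of an interval and hence measurable.

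Next I would expand $w(\partial\Omega_t(f))=\sum_{(x,y)\in X\times X}w(x,y)\,\bone_{\partial\Omega_t(f)}(x,y)$ and interchange summation and integration. As the summands are nonnegative and measurable in $t$, Tonelli's theorem applies and gives
\begin{align*}
\int_0^\infty w(\partial\Omega_t(f))\,dt
=\sum_{(x,y)\in X\times X}w(x,y)\int_0^\infty\bone_{\partial\Omega_t(f)}(x,y)\,dt
=\sum_{(x,y)\in X\times X}w(x,y)\big(f(x)-f(y)\big)_+.
\end{align*}

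It remains to symmetrize the last sum. Relabelling $x\leftrightarrow y$ and using $w(x,y)=w(y,x)$ shows that the same quantity also equals $\sum_{(x,y)}w(x,y)\big(f(y)-f(x)\big)_+$; averaging the two representations and invoking the elementary identity $(a)_++(-a)_+=|a|$ produces
\begin{align*}
\int_0^\infty w(\partial\Omega_t(f))\,dt=\frac12\sum_{x,y\in X}w(x,y)\,|f(x)-f(y)|,
\end{align*}
which is the asserted formula. The only points requiring care are the measurability and nonnegativity justifying the use of Tonelli — both immediate here — and the invocation of symmetry in the final averaging step; there is no substantive analytic obstacle beyond this bookkeeping.
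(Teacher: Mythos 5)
Your proof is correct, and it is essentially the argument the paper itself relies on: the paper gives no in-text proof but defers to \cite[Theorems~12 and~13]{KelLen10}, where the co-area formula is established by exactly your route --- the single-pair identity $\int_0^\infty \bone_{\partial\Omega_t(f)}(x,y)\,dt=\big(f(x)-f(y)\big)_+$, an interchange of sum and integral by Tonelli, and symmetrization via $(a)_++(-a)_+=|a|$. You were also right to flag explicitly that the symmetry of $w$, though not restated in the lemma, is the standing assumption of the surrounding section and is genuinely needed to recover the factor $\tfrac12$.
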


\begin{lemma}[Area formula]\label{l:area}
Let $m:X\to[0,\infty)$ and $f:X\to[0,\infty)$. Then,
\begin{align*}
    \sum_{x\in X}m(x)f(x)= \int_{0}^{\infty} m(\Omega_{t}{(f)})dt,
\end{align*}
where both sides may take the value $\infty$.
\end{lemma}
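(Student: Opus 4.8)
\section*{Proof proposal}

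The plan is to reduce the identity to Tonelli's theorem by means of the layer-cake representation of a nonnegative number. First I would record, for each fixed $x\in X$, the elementary identity
\begin{align*}
f(x)=\int_{0}^{\infty}\bone_{\{t<f(x)\}}\,dt,
\end{align*}
which is nothing but the fundamental theorem of calculus, valid because $f(x)\ge 0$; here $\bone_{\{t<f(x)\}}$ denotes the function of $t$ that equals $1$ for $t<f(x)$ and $0$ otherwise. The point to keep in mind is that, for each fixed $t>0$, one has $\bone_{\{t<f(x)\}}=1$ precisely when $x\in\Omega_{t}(f)$.

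Next I would multiply this identity by $m(x)$, sum over $x\in X$, and interchange the sum and the integral, obtaining
\begin{align*}
\sum_{x\in X}m(x)f(x)
=\sum_{x\in X}m(x)\int_{0}^{\infty}\bone_{\{t<f(x)\}}\,dt
=\int_{0}^{\infty}\sum_{x\in X}m(x)\bone_{\{t<f(x)\}}\,dt.
\end{align*}
Since $X$ is countable and every term is nonnegative, this interchange is justified by Tonelli's theorem, viewing the sum over $x$ as integration against the counting measure weighted by $m$ on $X$ and the integral over $t$ as integration against Lebesgue measure on $(0,\infty)$. To finish, I would identify the inner sum: for each fixed $t>0$,
\begin{align*}
\sum_{x\in X}m(x)\bone_{\{t<f(x)\}}=\sum_{x\in\Omega_{t}(f)}m(x)=m(\Omega_{t}(f)),
\end{align*}
by the definition of $\Omega_{t}(f)$ and of the measure $m$ extended by additivity. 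Substituting this into the previous display yields the claim.

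I do not expect any genuine obstacle here: the only step requiring care is the exchange of summation and integration, and the nonnegativity of $m$, $f$ and the indicators makes Tonelli's theorem directly applicable, with no integrability hypothesis needed. This is also precisely what guarantees that the two sides agree as elements of $[0,\infty]$, so that both may simultaneously equal $+\infty$ as asserted in the statement. No measurability subtlety arises, since for each $t$ the map $x\mapsto\bone_{\{t<f(x)\}}$ is a function on the countable set $X$, while for each $x$ the map $t\mapsto\bone_{\{t<f(x)\}}$ is the indicator of the interval $(0,f(x))$, which is Lebesgue measurable.
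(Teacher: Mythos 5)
Your proof is correct. Note that the paper itself does not prove this lemma at all: it refers the reader to \cite[Theorems~12 and~13]{KelLen10}, so your layer-cake/Tonelli argument --- writing $f(x)=\int_0^\infty \bone_{\{t<f(x)\}}\,dt$, interchanging sum and integral by nonnegativity, and identifying the inner sum with $m(\Omega_t(f))$ --- is exactly the standard argument one would find there, and it has the minor advantage of making the paper self-contained on this point.
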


In contrast to the continuous setting there is no chain rule in the
discrete. The lemma below serves as a proxy of the chain rule. It is
due to S.~Amghibech,~\cite[Lemma~3]{Amg03}. For the sake of being
self-contained we give a proof which is  slightly different from
Amghibech's proof and owes to \cite{HolSoa97}.
\begin{lemma}\label{l:f^p}
Let $f:X\to[0,\infty)$ and $x,y\in X$. Then, for all
$p\in[1,\infty)$,
\begin{align*}
  \left|f^{p}(x)-f^{p}(y)\right|\leq
  p \left(\frac{f^{p}(x)+f^{p}(y)}{2}\right)^{(p-1)/p}|f(x)-f(y)|.
\end{align*}
\end{lemma}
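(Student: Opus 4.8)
The plan is to reduce the statement to an elementary inequality between two non-negative reals and then prove it by combining two standard convexity facts. Since both sides are symmetric in the pair $(f(x),f(y))$, I would set $a:=f(x)$ and $b:=f(y)$ and assume without loss of generality that $a\ge b\ge 0$; the case $a=b$ is trivial, and for $p=1$ the asserted inequality is in fact an equality, so I may take $p\in(1,\infty)$ and $a>b\ge 0$. The goal is then to show $a^{p}-b^{p}\le p\big(\tfrac{a^{p}+b^{p}}{2}\big)^{(p-1)/p}(a-b)$.

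First I would write $a^{p}-b^{p}=p\int_{b}^{a}t^{p-1}\,dt$ by the fundamental theorem of calculus, so that the claim is equivalent to bounding the \emph{average} of $t^{p-1}$ over $[b,a]$, namely $\tfrac{1}{a-b}\int_{b}^{a}t^{p-1}\,dt\le\big(\tfrac{a^{p}+b^{p}}{2}\big)^{(p-1)/p}$. The key step is to write $t^{p-1}=(t^{p})^{(p-1)/p}$ and apply Jensen's inequality to the concave function $s\mapsto s^{(p-1)/p}$ (its exponent lies in $(0,1)$) with respect to the uniform probability measure $\tfrac{dt}{a-b}$ on $[b,a]$ and the quantity $t\mapsto t^{p}$; this gives $\tfrac{1}{a-b}\int_{b}^{a}t^{p-1}\,dt\le\big(\tfrac{1}{a-b}\int_{b}^{a}t^{p}\,dt\big)^{(p-1)/p}$.

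It then remains to bound the average of $t^{p}$ over the interval. Here I would use the convexity of $s\mapsto s^{p}$ in its sharpest geometric form: the graph lies below the chord joining $(b,b^{p})$ and $(a,a^{p})$, and the integral of that chord over $[b,a]$ equals $(a-b)\tfrac{a^{p}+b^{p}}{2}$. Hence $\tfrac{1}{a-b}\int_{b}^{a}t^{p}\,dt\le\tfrac{a^{p}+b^{p}}{2}$, and applying the increasing map $s\mapsto s^{(p-1)/p}$ and multiplying through by $p(a-b)$ yields the claim. The main point to get right is the orientation of the two convexity steps: the first uses Jensen for the concave function $s^{(p-1)/p}$ (average of the function $\le$ function of the average), whereas the second uses the chord bound for the convex function $s^{p}$ (average over the interval $\le$ average of the endpoint values). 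A naive mean-value estimate $a^{p}-b^{p}=p\xi^{p-1}(a-b)$ is too weak, since $\xi$ can approach $a$ and then $\xi^{p-1}$ exceeds $\big(\tfrac{a^{p}+b^{p}}{2}\big)^{(p-1)/p}$; it is precisely the two-step averaging argument that makes the constant $p$ and the weight $\big(\tfrac{a^{p}+b^{p}}{2}\big)^{(p-1)/p}$ match.
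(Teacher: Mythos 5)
Your proof is correct. Its first half coincides with the paper's: writing $a=f(x)\ge b=f(y)\ge 0$, the authors likewise express $\frac{1}{p}\cdot\frac{a^{p}-b^{p}}{a-b}$ as the average of $t^{p-1}$ over $[b,a]$ and apply Jensen's inequality to reduce the claim to $\frac{1}{a-b}\int_{b}^{a}t^{p}\,dt\le\frac{a^{p}+b^{p}}{2}$ --- they phrase Jensen for the convex function $x\mapsto x^{p/(p-1)}$ applied to $t\mapsto t^{p-1}$, which is exactly your estimate for the concave inverse $s\mapsto s^{(p-1)/p}$ applied to $t\mapsto t^{p}$. Where you genuinely diverge is in proving this remaining inequality. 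You get it in one line as the right-hand Hermite--Hadamard (chord) bound for the convex function $t\mapsto t^{p}$ on $[b,a]$. The paper instead computes $\int_{b}^{a}t^{p}\,dt=\frac{a^{p+1}-b^{p+1}}{p+1}$, invokes the identity $a^{p+1}-b^{p+1}=(a-b)(a^{p}+b^{p})+ab\left(a^{p-1}-b^{p-1}\right)$, and controls the last term by a chord bound for the \emph{other} convex function $t\mapsto t^{-p}$ on $[a^{-1},b^{-1}]$, using $\int_{a^{-1}}^{b^{-1}}t^{-p}\,dt=\frac{1}{p-1}\left(a^{p-1}-b^{p-1}\right)$. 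Both executions rest on the same convexity principle, but yours is shorter and more transparent; it also covers the endpoint case $b=0$ uniformly, which the paper must set aside separately (its second step divides by $ab$), and it makes no real use of the intermediate exponent $p+1$. Your closing remark that a naive mean-value bound $a^{p}-b^{p}=p\xi^{p-1}(a-b)$ is insufficient is apt and correctly identifies why the averaging argument is needed.
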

\begin{proof} The statement is trivial for  $p=1$, so assume
$p\in(1,\infty)$.
We assume without loss of generality $f(y)\leq f(x)$ and denote
$a=f(y)$, $b=f(x)$. Furthermore, the only non-trivial case is
$0<a<b$  which we assume so forth.
As the function $ x\mapsto x^{p/(p-1)}$  is convex on $ [0,\infty)$,
we obtain by Jensen's inequality
\[
\left( \frac{1}{p} \frac{b^p-a^p}{b-a}\right)^{\frac{p}{p-1}}
=\left(  \frac{\int_a^b t^{p-1}\ dt}{b-a}\right)^{\frac{p}{p-1}} \le
\frac{\int_a^b t^p \ dt}{b-a} = \frac{1}{p+1}
\frac{b^{p+1}-a^{p+1}}{b-a}.
\]
We proceed by identity
\begin{align*}
    b^{p+1}-a^{p+1}=(b-a)\left(b^{p}+a^{p}\right) +ab\left(b^{p-1}-a^{p-1}\right)
\end{align*}
which leaves us to estimate the term $ab(b^{p-1}-a^{p-1})$. Note
that this term is non-negative for all $p\ge1$. Moreover, the
function $t\mapsto t^{-p}$ is convex on $(0,\infty)$ and, thus, its
image lies below the line segment connecting the points
$(b^{-1},b^{p-1})$ and $(a^{-1},a^{p-1})$. Therefore, for $p>1$, we
estimate
\begin{align*}
    \frac{1}{(p-1)}\left(b^{p-1}-a^{p-1}\right)
    &=\int_{b^{-1}}^{a^{-1}}t^{-p}dt\\
    &\leq \left(a^{-1}-b^{-1}\right)
    \left(\frac{\left(b^{p}-a^{p}\right)}{2}+a^{p}\right)\\
    &=\frac{1}{2ab}(b-a)\left(b^{p}+a^{p}\right).
\end{align*}
The inequality combined with the inequality and the equality above
yields the statement.
\end{proof}

\begin{proof}[Proof of Theorem~\ref{t:isoperimetric_estimate}]
We calculate using the co-area formula, Lemma~\ref{l:area},  and the
area formula, Lemma~\ref{l:coarea}, with $f=|g|^{p}$
\begin{align*}
   h_{p,w,m,G} \|g\|_{p}^{p}&=h_{p,w,m,G}\sum_{x\in X}m(x)|g(x)|^{p}\\
    &=h_{p,w,m,G}\int_{0}^{\infty}m(\Omega_{t}{(|g|^{p})})dt\\
    &\leq \int_{0}^{\infty}w(\partial \Omega_{t}{(|g|^{p})}) dt    \\
    &=\frac{1}{2}\sum_{x,y\in X}w(x,y)||g(x)|^{p}-|g(y)|^{p}|\ .
\end{align*}
Applying Lemma~\ref{l:f^p} we  conclude
\begin{align*}
\ldots&\leq  \frac{p}{2}\sum_{x,y\in X}w(x,y)
\left(\frac{|g(x)|^{p}+|g(y)|^{p}}{2}\right)^{(p-1)/p}||g(x)|-|g(y)||\\
&\leq\frac{p}{2}\sum_{x,y\in X} b(x,y)\sigma(x,y)
\left(\frac{|g(x)|^{p}+|g(y)|^{p}}{2}\right)^{(p-1)/p}|g(x)-g(y)|,
\end{align*}
where the last inequality follows from the assumption $w\leq
b\sigma$ and $||b|-|a||\leq|b-a|$. Applying H\"older's inequality
and using the definition $k(x)=\sum_{y\in
X}(b\sigma^{p/(p-1)})(x,y)$ yields
\begin{align*}
\ldots    \leq& \  \frac{p}{2}\left(\sum_{x\in X}
k(x)|g(x)|^{p}\right)^{({p-1})/{p}}\left(\sum_{x,y\in X}b(x,y)
|g(x)-g(y)|^{p}\right)^{{1}/{p}}\\
=&\  \frac{p}{2}\|g\|_{k,p}^{(p-1)}(2\Efun_{p}(g))^{1/p}.
\end{align*}
The statement follows now by  taking the $p$-th power and dividing
by $p^{p}/2^{p-1}$.
\end{proof}

\subsection{Proof of the main results}

\begin{proof} [Proof of Theorem~\ref{t:main1}]
Let $p\in(1,\infty)$, $q=p/(p-1)$ and  $ d\in R_{p}(b,m)$.  With the
notation of Theorem~\ref{t:isoperimetric_estimate}, let $\si= d$ and
$w=b\si$. Then, $k(x)=\sum_{y\in X}b(x,y) d^{q}(x,y)$, $x\in X$. As
$ d\in R_{p}(b,m)$, we estimate for $\ph\in C_{c}(X)$
\begin{align*}
    \|\ph\|^{p}_{k,p}=\sum_{x\in X}|\ph(x)|^{p}
    \sum_{y\in X}b(x,y) d^{q}(x,y)
    \leq \sum_{x\in X}|\ph(x)|^{p}m(x)
    =\|\ph\|_{m,p}^{p}
\end{align*}
By Theorem~\ref{t:isoperimetric_estimate} applied with $G=C_{c}(X)$,
we obtain for all $\ph\in C_{c}(X)$
\begin{align*}
\frac{2^{p-1}}{p^{p}} {    h_{p,bd,m,C_{c}(X)}^{p}}
\|\ph\|^{p^{2}}_{m,p} \leq \|\ph\|_{k,p}^{p(p-1)}\Efun_{p}(\ph)
\leq\|\ph\|_{m,p}^{p(p-1)}\Efun_{p}(\ph)
\end{align*}
and by definition $ h_{p,bd,m,C_{c}(X)}=\hminq{}(d)$. Hence,
\begin{align*}
   \frac{2^{p-1}}{p^{p}} {\hminq{}(d)}^{p}
   \leq\frac{\Efun_{p}(\ph)}{{\|\ph\|}_{m,p}^{p}}.
\end{align*}
By taking the supremum over all  $d\in R_{p}(b,m)$ and the infimum
over all $\ph\in C_{c}(X)$, we arrive at the statement
$\frac{2^{p-1}}{p^{p}} {\hmin}^{p} \leq\lmin.$
\end{proof}

\begin{proof} [Proof of Theorem~\ref{t:main2}]
Let $p\in(1,\infty)$. Let $f\in D_{p}$ be a non-constant weak
solution for $\lmax$. As $\mathcal{E}_{p}(f_{\pm})\leq
\mathcal{E}_{p}(f)$ and $\|f_{\pm}\|_{m,p}\leq\|f\|_{m,p}$ for all
$p\in[1,\infty)$, we find that the positive and negative parts
$f_{\pm}$ of $f$ are in $D_{p}$ as well. With the elementary
estimate
\begin{align*}
    |r_{+}-s_{+}|^{p}\leq |r-s|^{p-2}(r-s)(r_{+}-s_{+}),\quad r,s\in
    \R,
\end{align*}
we get for the positive part $f_{+}$
\begin{align*}
    \Efun_{p}(f_{+}) \leq \mathcal{E}_{p}'(f)f_{+}
    =\lmax\langle{f_{+},|f|^{p-2}f}\rangle_{m}
    =\lmax\|f_{+}\|_{m,p}^{p}
\end{align*}
Since we assumed that the solution is non-constant, we deduce
$\lmax\neq0$ and because $1\in D_{p}$ we infer by the definition of
weak solutions
\begin{align*}
    \sum_{X}|f|^{p-2}f m=    \sum_{X}|f|^{p-2}f 1m=\frac{1}{\lmax}\mathcal{E}'_{p}(f)1=0.
\end{align*}
Hence, $f$ has non-definite sign and we can assume without loss of
generality that the positive part $f_{+}=f\vee0$ of $f$ satisfies
$m(\mathrm{supp} f_{+})\leq m(X)/2$. For $G=\{g\in D_{p}\mid
m(\mathrm{supp} g)\leq m(X)/2 \}$ we have that $Y_{G}\subseteq
\{A\subseteq X\mid m(A)\leq m(X)/2\}$. Hence, $f_{+}\in G$ and
$h_{p,bd,m,G}\ge\hmaxq{}(d)$ for any $ d\in R_{p}(b,m)$.   By
Theorem~\ref{t:isoperimetric_estimate} applied $\si= d$ and
$w=b\si$,  and the considerations above we get
\begin{align*}
 \frac{2^{p-1}}{p^{p}} {\hmaxq{}(d)}^{p}
\|f_{+}\|^{p^{2}}_{m,p} \leq
\|f_{+}\|_{k,p}^{p(p-1)}\Efun_{p}(f_{+})
\leq\lmax\|f_{+}\|_{m,p}^{p^{2}},
\end{align*}
where we  used $\|f_{+}\|_{k,p}\le\|f_{+}\|_{m,p}$ in the last
estimate which is implied by $d\in R_p(b,m)$. Since $f$ has
non-definite sign as discussed above, we have $f_{+}\neq 0$. Thus,
dividing by $\|f_{+}\|_{m,p}^{p^{2}}$ and taking the supremum over
all $ d\in R_p(b,m)$ yields the result.
\end{proof}

\subsection{Upper bounds}\label{s:upper}
In this subsection we show {Buser-type} upper bounds for $\lmin$ and
$\lmax$ in terms of isoperimetric constants $\hminq{}(d)$ and
$\hmaxq{}(d)$ for arbitrary functions $d$. To this end, for a given
graph $b$, we define for $d:X\times X\to[0,\infty)$
\begin{align*}
  \delta(d):=\inf\{d(x,y)\mid x,y\in X\mbox{ with }b(x,y)>0\}.
\end{align*}

\begin{thm}\label{t:upper1}
For any $p\in [1,\infty)$ and any  function $d:X\times
X\to[0,\infty)$  with $\delta(d)>0$, we have
\begin{align*}
    \lmin\leq \frac{\hminq(d)}{\delta(d)}.
\end{align*}
\end{thm}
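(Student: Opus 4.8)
The plan is to bound $\lmin$ from above by evaluating its defining quotient on a well-chosen family of test functions, namely the indicators of finite sets. First I would fix a nonempty finite $W\subseteq X$ and take $\ph=\bone_{W}\in C_{c}(X)$. Because $\bone_{W}$ is $\{0,1\}$-valued, each increment $|\bone_{W}(x)-\bone_{W}(y)|$ coincides with its own $p$-th power, so the energy collapses to the unweighted boundary mass,
\begin{align*}
\Efun_{p}(\bone_{W})=\frac{1}{2}\sum_{x,y\in X}b(x,y)\,|\bone_{W}(x)-\bone_{W}(y)|=|\partial W|_{b},
\end{align*}
while $\|\bone_{W}\|_{m,p}^{p}=m(W)$. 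Feeding this into the definition of $\lmin$ yields the preliminary estimate $\lmin\le |\partial W|_{b}/m(W)$ for every nonempty finite $W$. I note that this computation is uniform in $p\in[1,\infty)$, so it also covers the boundary case $p=1$.

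The second step converts the unweighted boundary mass $|\partial W|_{b}$ into the $d$-weighted one appearing in $\hminq(d)$, using the uniform lower bound $\delta(d)$. Every pair $(x,y)\in\partial W$ that contributes to these sums satisfies $b(x,y)>0$ and therefore $d(x,y)\ge\delta(d)$ by the definition of $\delta(d)$; summing the pointwise bound $b(x,y)d(x,y)\ge\delta(d)\,b(x,y)$ over $\partial W$ gives $|\partial W|_{bd}\ge\delta(d)\,|\partial W|_{b}$. Since $\delta(d)>0$ by hypothesis, I may divide and combine with the first step to obtain
\begin{align*}
\lmin\le\frac{|\partial W|_{b}}{m(W)}\le\frac{1}{\delta(d)}\,\frac{|\partial W|_{bd}}{m(W)}
\end{align*}
for every nonempty finite $W$. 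Taking the infimum over all such $W$ and recalling that $\hminq(d)=\inf_{W}|\partial W|_{bd}/m(W)$ then delivers the claimed inequality $\lmin\le\hminq(d)/\delta(d)$.

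I do not expect a genuine obstacle here, since the whole argument is a one-line test-function estimate followed by a pointwise comparison of boundary weights. The only point deserving care is the first identity $\Efun_{p}(\bone_{W})=|\partial W|_{b}$, where it is essential that the test function takes only the values $0$ and $1$ so that the exponent $p$ plays no role; this is precisely what lets the same proof run simultaneously for all $p\in[1,\infty)$. The hypothesis $\delta(d)>0$ enters only to license the division in the second step, and it guarantees that the right-hand side is a meaningful finite bound.
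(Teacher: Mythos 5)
Your argument is correct and coincides with the paper's proof: both evaluate the Rayleigh quotient on indicators $\bone_{W}$ of finite sets, use the identity $\Efun_{p}(\bone_{W})/\|\bone_{W}\|_{m,p}^{p}=|\partial W|_{b}/m(W)$, and then pass from $|\partial W|_{b}$ to $|\partial W|_{bd}$ via the pointwise bound $b\leq bd/\delta(d)$. The only difference is that you spell out the details the paper leaves implicit.
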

\begin{proof}
The inequality directly follows from $b\leq bd/\delta(d)$ and the
equality
\begin{align*}
    \frac{\mathcal{E}_{p}(1_{W})}{\|1_{W}\|_{m,p}^{p}}
    =\frac{|\partial W|_{b}}{m(W)}
\end{align*}
with $1_{W}$ being the characteristic function of a set $W\subseteq
X$.
\end{proof}

\begin{thm}\label{t:upper2}
Let $m(X)<\infty$. For any $p\in [1,\infty)$ and any function
$d:X\times X\to[0,\infty)$ with $\delta(d)>0$ we have
\begin{align*}
    \lmax \leq 2^{p-1}\frac{\hmaxq(d)}{\delta(d)}.
\end{align*},
\end{thm}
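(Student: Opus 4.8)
The plan is to test the variational quotient defining $\lmax$ with characteristic functions $1_W$ of sets $W$ with $0<m(W)\le m(X)/2$, exactly in the spirit of the proof of Theorem~\ref{t:upper1}, and then to estimate the centered denominator $\inf_{\gamma\in\R}\|1_W-\gamma\|_{m,p}^p$ from below. Since $m(X)<\infty$ and $0<m(W)\le m(X)/2$, the function $1_W$ is a non-constant element of $\ell^{p}(X,m)$, hence admissible in the definition of $\lmax$, giving $\lmax\le \Efun_p(1_W)/\inf_{\gamma}\|1_W-\gamma\|_{m,p}^p$.

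First I would compute the numerator exactly. As in Theorem~\ref{t:upper1}, expanding $\Efun_p(1_W)=\tfrac12\sum_{x,y}b(x,y)|1_W(x)-1_W(y)|^{p}$ and observing that the summand equals $b(x,y)$ precisely on the edges crossing $\partial W$ yields $\Efun_p(1_W)=|\partial W|_b$ for every $p\in[1,\infty)$.

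The crucial step is the lower bound on the denominator. Setting $A=m(W)$ and $B=m(X\setminus W)$, one has $\inf_{\gamma}\|1_W-\gamma\|_{m,p}^p=\inf_{\gamma\in\R}\bigl(|1-\gamma|^p A+|\gamma|^p B\bigr)$. The map $\gamma\mapsto |1-\gamma|^p A+|\gamma|^p B$ is convex, and both summands decrease as $\gamma$ is moved into $[0,1]$, so its minimizer lies in $[0,1]$; there $|\gamma|^p B\ge |\gamma|^p A$ because $A\le B$, which is exactly the constraint $m(W)\le m(X)/2$. Hence the infimum is at least $A\,\inf_{\gamma\in[0,1]}\bigl((1-\gamma)^p+\gamma^p\bigr)$, and the elementary minimization $\inf_{\gamma\in[0,1]}\bigl((1-\gamma)^p+\gamma^p\bigr)=2^{1-p}$ (attained at $\gamma=1/2$ for $p>1$, and identically equal for $p=1$) gives $\inf_{\gamma}\|1_W-\gamma\|_{m,p}^p\ge m(W)/2^{p-1}$.

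Finally I would combine these ingredients. Using $b\le bd/\delta(d)$ as in Theorem~\ref{t:upper1} gives $|\partial W|_b\le |\partial W|_{bd}/\delta(d)$, so for every admissible $W$ we obtain $\lmax\le |\partial W|_b/(m(W)/2^{p-1})\le 2^{p-1}|\partial W|_{bd}/(\delta(d)\,m(W))$; taking the infimum over all $W$ with $m(W)\le m(X)/2$ turns the right-hand side into $2^{p-1}\hmaxq(d)/\delta(d)$, which is the claim. The main obstacle is the denominator estimate: unlike in Theorem~\ref{t:upper1}, the quotient for $\lmax$ is measured against the best constant approximation rather than against $\|1_W\|_{m,p}^p=m(W)$, and the factor $2^{p-1}$ is precisely the price of this optimal centering, its appearance hinging on discarding the $B$-term down to an $A$-term via $m(X\setminus W)\ge m(W)$ before minimizing in $\gamma$.
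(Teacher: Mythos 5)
Your proof is correct, and while it follows the same basic strategy as the paper -- testing the Rayleigh quotient for $\lmax$ with an indicator-type function of a set $W$ with $m(W)\le m(X)/2$ and extracting the factor $2^{p-1}$ from an elementary convexity inequality -- it differs in one substantive and in fact advantageous way. The paper tests with $f_W=m(X\setminus W)1_W-m(W)1_{X\setminus W}$, which is just an affine rescaling of your $1_W$ (so the two quotients coincide, by homogeneity and translation invariance of $\Efun_p(f)/\inf_\gamma\|f-\gamma\|_{m,p}^p$), and then handles the centered denominator by observing $\langle f_W,1\rangle_m=0$ and concluding $\inf_{\gamma}\|f_W-\gamma\|_{m,p}=\|f_W\|_{m,p}$; it finishes by computing $\Efun_p(f_W)/\|f_W\|_{m,p}^p$ explicitly and applying $(\alpha+\beta)^p\le 2^{p-1}(\alpha^p+\beta^p)$. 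The orthogonality step is really a $p=2$ argument: for general $p$ the minimizer of $\gamma\mapsto\|f-\gamma\|_{m,p}$ need not be the mean, so the claimed equality is not justified as stated (only the harmless inequality $\inf_\gamma\|f_W-\gamma\|_{m,p}\le\|f_W\|_{m,p}$ is immediate, and that points the wrong way for an upper bound on $\lmax$). Your direct lower bound $\inf_{\gamma\in\R}\bigl(|1-\gamma|^pm(W)+|\gamma|^pm(X\setminus W)\bigr)\ge m(W)\inf_{\gamma\in[0,1]}\bigl((1-\gamma)^p+\gamma^p\bigr)=2^{1-p}m(W)$ sidesteps this entirely and yields the same constant, so your route is both valid and slightly more robust than the one in the paper.
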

\begin{proof} For any set $W\subseteq X$ with $m(W)\leq m(X)/2$ we let
\begin{align*}
    f_{W}=m(X\setminus W)1_{W}-m( W)1_{X\setminus
    W}.
\end{align*}
Then, $f_{W}\in \ell^{\infty}(X)$ and in fact $f\in\ell^{p}(X,m)$
for all $p\in[1,\infty]$ since $m(X)<\infty$. Therefore,
\begin{align*}
\langle{f_{W}},{1}\rangle_m=m(W)m(X\setminus W)-m(W)m(X\setminus
    W)=0
\end{align*}
which implies $\min_{\gm\in \R}\|f-\gm\|_{m,p}=\|f\|_{m,p}$.
Moreover,
\begin{align*}
    \frac{\mathcal{E}_{p}(f_{W})}{{\|f_{W}\|}_{m,p}^{p}}
    =\frac{|\partial W|_{b}(m(X\setminus W)+m(W))^{p}}
    {m(W)m(X\setminus W)^{p}+m(X\setminus W)m(W)^{p}}.
\end{align*}
 By $m(X)<\infty $ we have $\mathcal{E}_{p}(f_{W})<\infty$
and $f\in \ell^{p}(X,m)$ and, therefore, $f\in D_{p}$. Together with
the observations above this yields that
${\mathcal{E}_{p}(f_{W})}/{\|f_{W}\|_{m,p}^{p}}$ is larger than
$\lmax$. Moreover, we apply the inequality $(\al+\beta)^{p}\leq
2^{p-1}(\al^{p}+\beta^{p})$, $\al,\beta\ge0$ and $m(W)\leq
m(X)/2\leq m(X\setminus W)$ to conclude
\begin{align*}
    \lmax \leq 2^{p-1}\frac{|\partial W|_{b}}{m(W)}\leq
 {2^{p-1}}\frac{|\partial W|_{bd}}{\delta(d) m(W)}
\end{align*}
which yields the statement.
\end{proof}

\begin{rem}Let us comment on the constant $\delta(d)$ in the
denominator. The upper bound becomes trivial whenever $\delta
(d)=0$. Suppose that $d\in R_{p}(b,m)$, in which case $\hminq(d)$
and $\hmaxq(d)$ also appears in the lower bounds. In this case
$\delta(d)=0$ whenever $\mathrm{Deg}$ is unbounded, i.e.,
$M=\sup_{x\in X}\mathrm{Deg}(x)=\infty$: Indeed, using the
definition $\mathrm{Deg}(x)=\frac{1}{m(x)}\sum_{y\in X}b(x,y)$, we
see
\begin{align*}
\mathrm{Deg}(x)\leq{\de(d)^{-\frac{p}{p-1}}}\frac{1}{m(x)}\sum_{y\in
X}b(x,y)d(x,y)^{\frac{p}{p-1}}\leq{\de(d)^{-\frac{p}{p-1}}}.
\end{align*}
Thus, $M\leq {\de(d)^{-\frac{p}{p-1}}}$.

 If one considers the isoperimetric constant $h =\inf_{m(W)\leq
m(X)/2}\frac{|\partial W|_{b}}{m(W)}=\hmaxq(1)$ from
Remark~\ref{rem:BH}.(a) instead, then $\delta(1)=1$ and one gets the
upper bound,
\begin{align*}
 \lmax\leq 2^{p-1}\hmaxq(1)
\end{align*}
cf. \cite{BuhHei09}, which does not depend on $M$. However, as
already discussed in Remark~\ref{rem:BH}.(a) this comes   at the
expense of the worse lower bound $(2/M)^{p-1}(\hmaxq(1)/p)^{p}$.
\end{rem}

\subsection{Convergence results for finite
graphs}\label{s:convergence}

In this section we give an alternative proof of the convergence
result $\lmax\to\lmaxq_{1}$, $p\to1$  for finite graphs which is
originally due to B\"uhler/Hein, \cite{HeiBuh10}.

\begin{thm} If $X$ is finite, then  $\lim_{p\downarrow
1}\lm^{(1)}_{p}=\lm^{(1)}_{1}$.
\end{thm}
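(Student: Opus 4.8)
The plan is to establish the two inequalities $\limsup_{p\downarrow1}\lm^{(1)}_{p}\leq\lm^{(1)}_{1}$ and $\lm^{(1)}_{1}\leq\liminf_{p\downarrow1}\lm^{(1)}_{p}$ by the direct method of the calculus of variations, exploiting that for finite $X$ the space $C(X)\cong\R^{X}$ is finite-dimensional so that the relevant sublevel sets are compact. The only analytic input needed is the joint continuity of $(t,p)\mapsto t^{p}$ on $[0,\infty)\times[1,2]$; since all sums over $X$ are finite, this immediately gives continuity in $p$ at $p=1$ of the maps $f\mapsto\Efun_{p}(f)$ and $\gm\mapsto\|f-\gm\|_{m,p}^{p}$, and more precisely uniform convergence $t^{p}\to t$ on compact subsets of $[0,\infty)$.

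For the upper estimate I fix an arbitrary non-constant $f\in C(X)$. The optimal translate in $\inf_{\gm\in\R}\|f-\gm\|_{m,p}^{p}$ may be restricted to the compact interval $[\min_{X}f,\max_{X}f]$ for every $p\in[1,2]$, and on this interval $\gm\mapsto\|f-\gm\|_{m,p}^{p}$ converges uniformly to $\gm\mapsto\|f-\gm\|_{m,1}$ as $p\downarrow1$; hence the corresponding infima converge. Combined with $\Efun_{p}(f)\to\Efun_{1}(f)$, this yields
\[
\lim_{p\downarrow1}\frac{\Efun_{p}(f)}{\inf_{\gm\in\R}\|f-\gm\|_{m,p}^{p}}
=\frac{\Efun_{1}(f)}{\inf_{\gm\in\R}\|f-\gm\|_{m,1}}.
\]
As $\lm^{(1)}_{p}$ is dominated by the quotient on the left, taking $\limsup_{p\downarrow1}$ and then the infimum over all non-constant $f$ gives $\limsup_{p\downarrow1}\lm^{(1)}_{p}\leq\lm^{(1)}_{1}$.

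For the lower estimate I argue by compactness. For each $p\in(1,\infty)$ a minimizer $f_{p}$ of the Rayleigh quotient defining $\lm^{(1)}_{p}$ exists, since this quotient is continuous and homogeneous of degree $0$ on the finite-dimensional quotient of $C(X)$ by the constants. After replacing $f_{p}$ by $f_{p}-\gm_{p}$ with $\gm_{p}$ an optimal translate and rescaling, I may assume $\inf_{\gm\in\R}\|f_{p}-\gm\|_{m,p}^{p}=\|f_{p}\|_{m,p}^{p}=1$, so that $\Efun_{p}(f_{p})=\lm^{(1)}_{p}$. This normalization forces $|f_{p}(x)|\leq m(x)^{-1/p}$, whence the family $\{f_{p}\}_{p\in(1,2]}$ is bounded in $\R^{X}$. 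Choosing $p_{n}\downarrow1$ realizing $\liminf_{p\downarrow1}\lm^{(1)}_{p}$ and passing to a subsequence, I may assume $f_{p_{n}}\to f$ pointwise.

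It remains to show that $f$ is admissible for $\lm^{(1)}_{1}$. By pointwise convergence and $p_{n}\to1$ one has $\Efun_{p_{n}}(f_{p_{n}})\to\Efun_{1}(f)$ and $\|f_{p_{n}}\|_{m,p_{n}}^{p_{n}}\to\|f\|_{m,1}$, so $\|f\|_{m,1}=1$; similarly $\|f_{p_{n}}-\gm\|_{m,p_{n}}^{p_{n}}\to\|f-\gm\|_{m,1}$ for each fixed $\gm\in\R$, and since each left-hand side is $\geq1$ we obtain $\inf_{\gm\in\R}\|f-\gm\|_{m,1}\geq1=\|f\|_{m,1}$, hence $\inf_{\gm\in\R}\|f-\gm\|_{m,1}=1>0$. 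In particular $f$ is non-constant and therefore admissible, so
\[
\lm^{(1)}_{1}\leq\frac{\Efun_{1}(f)}{\inf_{\gm\in\R}\|f-\gm\|_{m,1}}
=\Efun_{1}(f)=\lim_{n}\Efun_{p_{n}}(f_{p_{n}})=\liminf_{p\downarrow1}\lm^{(1)}_{p}.
\]
Together with the upper estimate this proves $\lim_{p\downarrow1}\lm^{(1)}_{p}=\lm^{(1)}_{1}$. I expect the main obstacle to be exactly this final step: guaranteeing that the limit $f$ is non-constant and that the $p$-dependent normalization (unit $\ell^{p}$-norm together with $0$ being an optimal translate) survives the passage to the limit. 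Both rely on the compactness afforded by the finiteness of $X$ and on the uniform convergence $t^{p}\to t$ on compacta; without finiteness one would instead need an additional coercivity or tightness argument to prevent the minimizers from spreading out.
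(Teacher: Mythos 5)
Your argument is correct, but it takes a genuinely different route from the paper. The paper deduces the convergence from its own Cheeger machinery: it sandwiches $\lmax$ between the lower bound $\frac{2^{p-1}}{p^{p}}\hmax(d_{p})^{p}$ of Theorem~\ref{t:main2} (with the explicit weight $d_{p}=(\Deg\vee\Deg)^{-(p-1)/p}$ of Example~\ref{example}) and the upper bound $\lmax\leq 2^{p-1}\hmaxq(1)$ of Theorem~\ref{t:upper2}, then uses the identity $\hmaxq(1)=\lmaxq_{1}$ (the $p=1$ Cheeger \emph{equality}, carried over from \cite{Chu97}) together with the observation that finiteness of $X$ leaves only finitely many admissible sets $W$, so $\hmax(d_{p})\to\hmaxq_{1}(1)$ as $d_{p}\to1$. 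You instead work directly with the Rayleigh quotients: a pointwise limit argument for the $\limsup$ bound and a compactness/normalization argument for the $\liminf$ bound, which is closer in spirit to the original B\"uhler--Hein proof that the paper is explicitly offering an alternative to. Your version is self-contained and needs neither the Cheeger inequalities nor the $p=1$ equality $h=\lmaxq_{1}$, whereas the paper's version is shorter given its machinery and simultaneously illustrates that its two-sided Cheeger bounds become tight as $p\downarrow1$. All the delicate points in your argument check out for finite $X$: the optimal translate can indeed be confined to $[\min_{X}f,\max_{X}f]$; the normalization $\inf_{\gm}\|f_{p}-\gm\|_{m,p}^{p}=\|f_{p}\|_{m,p}^{p}=1$ is preserved under the rescaling by homogeneity; the uniform bound $|f_{p}(x)|\leq m(x)^{-1/p}$ gives compactness; and the limit $f$ is non-constant because $\inf_{\gm}\|f-\gm\|_{m,1}=1>0$.
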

\begin{proof}
We pick the function $d$ given by
\begin{align*}
    d_{p}(x,y)=(\mathrm{Deg}(x)\vee \mathrm{Deg}(y))^{-(p-1)/p},\quad
    x,y\in X,
\end{align*}
with $\mathrm{Deg}(x)=\sum_{y\in X}b(x,y)/m(x)$. {As discussed in
Example~\ref{example}.(a)}, $d\in R_{p}(b,m)$. By the lower bound in
Theorem~\ref{t:main2} and the upper bound in Theorem~\ref{t:upper2},
we get
\begin{align*}
 \frac{2^{p-1}}{p^{p}}\hmax(d_{p})^{p}& \leq   \lmax\leq
\hmaxq(1)=\lmaxq_{1},
\end{align*}
where a proof of the equality on the right hand side can be carried
over verbatim from \cite[Theorem~2.6]{Chu97} replacing the
normalizing measure by general $m$.
 Since $X$ is finite, there are only finitely many subsets $W$ with
$m(W)\leq m(X)/2$. So, since $d_{p}\to 1$ for $p\to 1$, we deduce
$\hmax(d_{p})\to \hmaxq_{1}(1)$ for $p\to 1$. Thus, it follows
$\lmax\to \lmaxq_{1}$ for $p\to 1$.
\end{proof}

\section{Brook's theorem}\label{s:brooks}
In this section we show an estimate on $\lmin$ from above in terms
of the volume growth of the graph. A result of this type was first
proven by Brooks \cite{Bro81} on manifolds and it was later improved
and generalized in \cite{LiWan01,Stu94}. Similar results were proven
for the normalized Laplacian on graphs in
\cite{DodKar88,Fuj96a,OhnUra94} in the case $p=2$ and in
\cite{Tak03} for general $p$. In \cite{HaeKelWoj13} a corresponding
result for regular Dirichlet forms is proven which unifies all the
above results for $p=2.$ Here, we show a analogous result for
general $p$ and general $p$-Laplacians.

We define  {the exponential volume growth of $X$ by}
\begin{align*}
    \mu=\liminf_{r\to\infty}\inf_{o\in X}\frac{1}{r}\log
   \frac{ m(B_{r}^{(d)}(o))}{m(B_{1}^{(d)}(o))}.
\end{align*}
where $B_{r}^{(d)}(o)$ is the \emph{distance ball} with center $o$
and radius $r$ with respect to a given pseudo metric $d$.

\begin{thm}\label{t:brooks}
Let $p\in[1,\infty)$ and let $d$ be a pseudo metric such that all
distance balls are finite and such that
\begin{align*}
    \sum_{y\in
X}b(x,y)d(x,y)^{p}\leq m(x), \ x\in X.
\end{align*}
Then,
\begin{align*}
    \lm_{p}^{(0)}\leq \frac{\mu^{p}}{2p^{p}}.
\end{align*}
\end{thm}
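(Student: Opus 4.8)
The goal is to bound $\lm_p^{(0)}$ from above, so the natural strategy is to exhibit a family of test functions $\ph_\alpha \in C_c(X)$ (or at least functions for which the Rayleigh quotient $\Efun_p(\ph)/\|\ph\|_{m,p}^p$ is controlled) whose quotient approaches $\mu^p/(2p^p)$. Motivated by the classical Brooks argument in the linear case \cite{HaeKelWoj13}, I would choose exponentially decaying functions of the distance, namely
\begin{align*}
    \ph_\alpha(x) := e^{-\alpha\, d(o,x)}
\end{align*}
for a fixed basepoint $o\in X$ and a decay rate $\alpha>0$ to be optimized at the end. Here $d(o,x)$ denotes the distance to $o$ in the given pseudo metric. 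Since all distance balls are finite, $\ph_\alpha$ is square-summable once $\alpha$ exceeds $\mu/p$, but one must check integrability carefully; the finiteness of balls plus the volume-growth hypothesis is exactly what guarantees $\ph_\alpha \in \ell^p(X,m)$ for $\alpha > \mu/p$.

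\textbf{Estimating the energy.} The key computation is to bound $\Efun_p(\ph_\alpha)$ in terms of $\|\ph_\alpha\|_{m,p}^p$. For each edge $x\sim y$ one estimates
\begin{align*}
    |\ph_\alpha(x)-\ph_\alpha(y)|
    = e^{-\alpha d(o,x)}\bigl|1 - e^{-\alpha(d(o,y)-d(o,x))}\bigr|.
\end{align*}
Using the triangle inequality $|d(o,y)-d(o,x)| \le d(x,y)$ and the elementary bound $|1-e^{-s}| \le |s|\,e^{|s|}$ (or a one-sided variant), the difference is controlled by something like $\alpha\, d(x,y)\, e^{\alpha d(x,y)}\ph_\alpha(x)$. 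Plugging this into $\Efun_p$, taking $p$-th powers, and summing over $y$, the weighted sum $\sum_y b(x,y) d(x,y)^p$ appears, which by the standing hypothesis is bounded by $m(x)$. This is precisely where the condition $\sum_y b(x,y)d(x,y)^p \le m(x)$ is used, and it converts the energy into $C(\alpha)\,\|\ph_\alpha\|_{m,p}^p$ for an explicit prefactor $C(\alpha)$ roughly of the form $\tfrac12(\alpha e^{\alpha})^p$ after accounting for the factor $\tfrac12$ and the symmetrization in $\Efun_p$.

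\textbf{Extracting the growth rate and optimizing.} The prefactor $C(\alpha)$ will not by itself give $\mu^p/(2p^p)$; the exponential factor $e^{\alpha d(x,y)}$ is problematic on edges of large length, and the crude bound above is too lossy. The correct route is instead to relate the Rayleigh quotient to $\mu$ directly: one restricts to balls $B_r^{(d)}(o)$, uses the definition of $\mu$ to control the ratio of volumes of successive shells, and lets $r\to\infty$ so that only the asymptotic growth rate survives. Concretely, I would bound the quotient by an expression that, after passing to the liminf defining $\mu$, becomes $\inf_{\alpha>\mu/p}(\text{something}(\alpha))$, and then optimize over $\alpha$. The clean choice should make the bound $\alpha^p/(2p^p) \cdot (\text{a factor}\to 1)$ valid for every $\alpha > \mu/p$; letting $\alpha \downarrow \mu/p$ then yields exactly $\lm_p^{(0)} \le \mu^p/(2p^p)$.

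\textbf{The main obstacle.} I expect the hard part to be the energy estimate done sharply enough that no extra exponential loss survives in the limit. The naive inequality $|1-e^{-s}|\le |s|e^{|s|}$ introduces a factor $e^{\alpha d(x,y)}$ that does not vanish and could spoil the constant; controlling this requires either a more careful two-point inequality for $|e^{-\alpha a}-e^{-\alpha b}|^p$ that isolates the linear-in-$\alpha$ leading behavior, or a reorganization in which the exponential weight is absorbed by comparing $\ph_\alpha$ on a ball to $\ph_\alpha$ on a slightly larger ball and exploiting the volume-growth definition before optimizing in $\alpha$. Getting the constant exactly $1/(2p^p)$—in particular the factor $\tfrac12$ coming from the symmetric double sum in $\Efun_p$ and the power $p^p$ from the optimization in $\alpha$—will be the delicate bookkeeping that makes the statement tight.
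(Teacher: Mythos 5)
Your overall strategy (exponential test functions in the distance, optimize over the decay rate $\al>\mu/p$, and use $\sum_y b(x,y)d(x,y)^p\le m(x)$ to convert the energy into a weighted $\ell^p$-norm) is the right one and matches the paper in spirit, but the two points you leave open are precisely the two points where the actual proof has content, and one of your claims is false as stated. First, the global function $\ph_\al(x)=e^{-\al d(o,x)}$ need \emph{not} lie in $\ell^p(X,m)$ for $\al>\mu/p$: the exponential volume growth $\mu$ is defined as a $\liminf$ in $r$ (and an infimum over base points), so the volume of balls is only controlled along a subsequence of radii, while summability of $e^{-p\al d(o,\cdot)}$ would require an upper bound on $m(B_r)$ for \emph{all} large $r$, i.e.\ a $\limsup$ condition. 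Moreover $\ph_\al$ is not in $C_c(X)$, so it is not admissible in the variational definition of $\lm^{(0)}_p$. The paper avoids both issues by using the capped and truncated functions $f_{r,x_0,\al}=\big((e^{\al r}\wedge e^{\al(2r-d(x_0,x))})-1\big)\vee 0$, which are supported in $B_{2r}$ (hence in $C_c(X)$ by finiteness of balls), together with Lemma~\ref{l:f}, which selects centers $x_k$ and radii $r_k$ exploiting the $\liminf$ structure so that the comparison functions $g_k=(f_k+2)1_{B_{2r_k}}$ satisfy $\|g_k\|_{m,p}/\|f_k\|_{m,p}\to1$. This selection lemma is not a formality; it is where the definition of $\mu$ actually enters.

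Second, the ``main obstacle'' you identify --- the parasitic factor $e^{\al d(x,y)}$ from the crude bound $|1-e^{-s}|\le|s|e^{|s|}$ --- is resolved in the paper by the sharp two-point inequality
\begin{align*}
|e^{s}-e^{t}|^{p}\leq \tfrac{1}{2}\left(e^{sp}+e^{tp}\right)|s-t|^{p},\qquad s,t\ge0,
\end{align*}
(Lemma~\ref{l:Lip}), which places the exponential weight at the two endpoints rather than on the edge length; after summing against $b(x,y)d(x,y)^p\le m(x)$ this weight is exactly $\|g_k\|_{m,p}^p$ and no loss survives. You correctly guess that such a refined two-point inequality is needed, but you do not supply it, and without it the constant $\mu^p/(2p^p)$ is not reachable. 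So the proposal is a sound outline of the paper's approach, but as a proof it has two genuine gaps: the integrability/admissibility of the test functions under a $\liminf$ volume-growth hypothesis, and the endpoint-weighted exponential difference inequality.
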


First, let $d$ be an arbitrary pseudo metric and $\mu$ be the
exponential volume growth defined above. To ease notation we denote
the $r$-balls with center $x_{0}\in X$ by
$B_{r}:=B_{r}^{(d)}(x_{0})$ whenever the pseudo metric $d$ and the
center does not vary.

Next, we construct the test functions following the ideas of
\cite{HaeKelWoj13}. For $r\in\N$, $x_{0}\in X$, $\al>0$, define
\begin{align*}
f_{r,x_{0},\al}&:X\to[0,\infty),\quad x\mapsto  \big( (e^{\al
r}\wedge e^{\al (2r-d(x_{0},x))}) - 1 \big)\vee 0.
\end{align*}
Obviously, we have $f\vert_{B_{r}}\equiv e^{\al r}-1$,
$f\vert_{B_{2r}\setminus B_{r}}= e^{ \al(2r- \rho(x_{0},\cdot))}-1$
and $f\vert_{X\setminus B_{2r}}\equiv0$. Clearly, $f$ is spherically
homogeneous, i.e., there exists $h:[0,\infty)\to[0,\infty)$ such
that $f(x)=h(d(x_{0},x))$.

Moreover, for $r\in\N$, $x_{0}\in X$, $\al>0$, let
$g_{r,x_{0},\al}:X\to[0,\infty)$, be given by
\begin{align*}
g_{r,x_{0},\al}=(f_{r,x_{0},\al}+2)1_{B_{2r}}.
\end{align*}

In \cite{HaeKelWoj13} the following lemma is proven for $p=2$.
However,  with the obvious modifications the proof carries over
verbatim to the case of general $p$.

\begin{lemma}[Lemma~2.2 in \cite{HaeKelWoj13}]\label{l:f} Let $p\in[1,\infty)$
If $\al>\mu/p$, then  there are sequences $(x_{k})$ in $X$ and
$(r_{k})$ in $\N$ such that
$f_{k}=f_{r_{k},x_{k},\al},g_{k}=g_{r_{k},x_{k},\al}\in
\ell^{p}(X,m)$ and we have that $\|g_{k}\|_{m,p}/\|f_{k}\|_{m,p}\to
1$ as $k\to\infty$.
\end{lemma}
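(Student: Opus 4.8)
The plan is to reduce the claimed convergence to a single estimate on the growth of ball volumes, and then to produce the sequence by a careful choice that places the liminf-realizing radius at the \emph{outer} scale; this reconstructs the argument cited from \cite{HaeKelWoj13}. First note that, since all distance balls are finite by hypothesis and both $f_{r,x_0,\al}$ and $g_{r,x_0,\al}$ are supported in $B_{2r}^{(d)}(x_0)$, they are finitely supported and hence lie in $\ell^p(X,m)$ automatically; the only real content is therefore the ratio convergence. For the reduction I would observe $0\le f_{r,x_0,\al}\le g_{r,x_0,\al}$, so that $\|g_{r,x_0,\al}\|_{m,p}\ge\|f_{r,x_0,\al}\|_{m,p}$ and it suffices to bound the ratio above by $1+o(1)$. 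Since $g_{r,x_0,\al}-f_{r,x_0,\al}=2\cdot 1_{B_{2r}^{(d)}(x_0)}$, the triangle inequality in $\ell^p(X,m)$ gives
\[
\|g_{r,x_0,\al}\|_{m,p}\le\|f_{r,x_0,\al}\|_{m,p}+2\,m\big(B_{2r}^{(d)}(x_0)\big)^{1/p},
\]
while the plateau identity $f_{r,x_0,\al}\equiv e^{\al r}-1$ on $B_r^{(d)}(x_0)$ yields $\|f_{r,x_0,\al}\|_{m,p}\ge(e^{\al r}-1)\,m(B_r^{(d)}(x_0))^{1/p}$. Combining, the ratio is at most
\[
1+\frac{2}{e^{\al r}-1}\left(\frac{m(B_{2r}^{(d)}(x_0))}{m(B_r^{(d)}(x_0))}\right)^{1/p},
\]
so it is enough to find $x_k$ and $r_k\to\infty$ with
\[
\limsup_{k\to\infty}\frac1{r_k}\log\frac{m(B_{2r_k}^{(d)}(x_k))}{m(B_{r_k}^{(d)}(x_k))}<p\al .
\]

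The heart of the matter is this last selection, and it is where the hypothesis $\al>\mu/p$ must be used sharply. Writing $F(r):=\inf_{o\in X}\tfrac1r\log\frac{m(B_r^{(d)}(o))}{m(B_1^{(d)}(o))}$, so that $\mu=\liminf_{r}F(r)$, I would fix $\eps>0$ with $\mu+\eps<p\al$, choose $R_k\to\infty$ with $F(R_k)\to\mu$, and for each $k$ pick a center $x_k$ with $\tfrac1{R_k}\log\frac{m(B_{R_k}^{(d)}(x_k))}{m(B_1^{(d)}(x_k))}\le F(R_k)+1/k$. The decisive step is to set $r_k:=\lfloor R_k/2\rfloor$, so that the well-controlled radius $R_k\ge 2r_k$ sits \emph{outside} the doubled ball. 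Monotonicity then gives $m(B_{2r_k}^{(d)}(x_k))\le m(B_{R_k}^{(d)}(x_k))$, so the outer rate $\tfrac1{r_k}\log\frac{m(B_{2r_k}^{(d)}(x_k))}{m(B_1^{(d)}(x_k))}$ is bounded by $\tfrac{R_k}{r_k}\cdot\tfrac1{R_k}\log\frac{m(B_{R_k}^{(d)}(x_k))}{m(B_1^{(d)}(x_k))}\to 2\mu$, while the inner rate obeys $\tfrac1{r_k}\log\frac{m(B_{r_k}^{(d)}(x_k))}{m(B_1^{(d)}(x_k))}\ge F(r_k)\ge\mu-\eps$ for large $k$, because $\liminf_r F(r)=\mu$ forces $F(r)\ge\mu-\eps$ eventually. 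Subtracting the two rates cancels the $m(B_1^{(d)}(x_k))$ factors and gives $\limsup_k\tfrac1{r_k}\log\frac{m(B_{2r_k}^{(d)}(x_k))}{m(B_{r_k}^{(d)}(x_k))}\le\mu+\eps<p\al$, which is exactly what the reduction requires.

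I expect the main obstacle to be precisely this selection step. The naive choice of letting $r_k$ itself be the liminf-realizing radius loses a factor of two: it controls only $m(B_{r_k}^{(d)}(x_k))$, and the volume could grow much faster between $r_k$ and $2r_k$, which would force the strictly weaker hypothesis $\al>2\mu/p$. Placing the good radius at the outer scale and exploiting that the \emph{inner} growth rate is automatically bounded below by $\mu$ (so that it helps upon subtraction) is what recovers the sharp threshold $\al>\mu/p$, and hence the sharp constant in Theorem~\ref{t:brooks}. Once the displayed volume estimate is secured, substituting it into the ratio bound shows the bracketed term behaves like $2\,e^{((\mu+\eps)/p-\al)r_k}\to 0$, whence $\|g_{r_k,x_k,\al}\|_{m,p}/\|f_{r_k,x_k,\al}\|_{m,p}\to1$, completing the proof. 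The same computation covers $p=1$ verbatim, since there $p\al=\al>\mu$.
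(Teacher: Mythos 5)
Your argument is correct and is essentially the proof that the paper delegates to \cite{HaeKelWoj13}: the decisive radius-selection step --- taking $R_k$ to (nearly) realize the liminf defining $\mu$, setting $r_k=\lfloor R_k/2\rfloor$ so that the well-controlled radius sits at the outer scale, and using that the inner rate $\tfrac1{r_k}\log\bigl(m(B_{r_k}^{(d)}(x_k))/m(B_1^{(d)}(x_k))\bigr)$ is eventually at least $\mu-\eps$ --- is exactly what recovers the sharp threshold $\al>\mu/p$ instead of $\al>2\mu/p$. The only caveat is that finiteness of the distance balls (which you invoke for the $\ell^p$-membership of $f_k$ and $g_k$) is stated as a hypothesis of Theorem~\ref{t:brooks} rather than of the lemma itself, but this is precisely how the paper applies the lemma, so your reading is the intended one.
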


The following lemma is also found in \cite{HaeKelWoj13} for $p=2$.
{We sketch a short proof for general $p$}.

\begin{lemma}[Lemma~2.5 in \cite{HaeKelWoj13}]
\label{l:Lip} Let $p\in[1,\infty)$, $r\in\N$, $x_{0}\in X$, $\al>0$
and set $f:=f_{r,x_{0},\al}$, $g:=g_{r,x_{0},\al}$. Then, for
$x,y\in X$
\begin{align*}
(f(x)-f(y))^{p}&\leq \frac{{\al^{p}}}{2}  (g(x)^{p}+g(y)^p)
d(x,y)^{p}.
\end{align*}
\end{lemma}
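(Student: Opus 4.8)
The plan is to use the spherical homogeneity of $f$ and $g$ to reduce the inequality to a one-variable estimate for the radial profile $h$, where $f(x)=h(d(x_{0},x))$ and $h(\sigma)=\big((e^{\al r}\wedge e^{\al(2r-\sigma)})-1\big)\vee 0$ is non-increasing. Write $s:=d(x_{0},x)$ and $t:=d(x_{0},y)$. Since both sides of the claimed inequality are symmetric in $x,y$, I may assume $s\le t$, so that $f(x)-f(y)=h(s)-h(t)\ge 0$. Because $d$ is a pseudo metric, $t-s=|d(x_{0},x)-d(x_{0},y)|\le d(x,y)$, so it suffices to bound $h(s)-h(t)$ by an appropriate multiple of $(t-s)$; the factor $d(x,y)^{p}$ then comes for free.

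The heart of the matter is the radial estimate
\[
h(s)-h(t)\le \frac{\al}{2}\big(g(x)+g(y)\big)(t-s).
\]
To obtain it I would use that $h$ is constant outside the annulus $r<\sigma<2r$ and has derivative $-\al e^{\al(2r-\sigma)}$ inside it, so that $h(s)-h(t)=\al\int_{(s,t)\cap(r,2r)}e^{\al(2r-\tau)}\,d\tau$. The crucial feature is that $\tau\mapsto e^{\al(2r-\tau)}$ is convex; hence by the trapezoidal bound (a convex function lies below its chords) the integral over the effective interval with endpoints $s':=s\vee r$ and $t':=t\wedge 2r$ is at most $\tfrac12\big(e^{\al(2r-s')}+e^{\al(2r-t')}\big)(t'-s')$. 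Since on $B_{2r}$ one has $g=f+2=h+2$, the value $e^{\al(2r-\sigma)}=h(\sigma)+1$ is bounded by the corresponding value of $g$; matching the two endpoints to $g(x)$ and $g(y)$ and using $t'-s'\le t-s$ yields the displayed inequality. This is precisely where the shift by $2$ in the definition of $g$, and with it the factor $\tfrac12$, enters.

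Here the one delicate point is the case $t>2r$, i.e.\ $y\notin B_{2r}$: then $g(y)=0$, whereas the chord still carries the endpoint value $e^{\al(2r-t')}=1$ at $t'=2r$. In this case I would instead estimate the integral by $\tfrac{\al}{2}g(x)(t-s)$, using $2r-s'\le t-s$ and $h(s)+1\le g(x)$; since $g(y)=0$ this again has the required form. With the radial estimate in hand, I would finally pass from the arithmetic to the $p$-mean: convexity of $\tau\mapsto\tau^{p}$ gives $\tfrac12(g(x)+g(y))\le\big(\tfrac12(g(x)^{p}+g(y)^{p})\big)^{1/p}$ for $p\ge1$, whence
\[
h(s)-h(t)\le \al\Big(\tfrac{g(x)^{p}+g(y)^{p}}{2}\Big)^{1/p}(t-s)\le \al\Big(\tfrac{g(x)^{p}+g(y)^{p}}{2}\Big)^{1/p}d(x,y).
\]
Raising to the $p$-th power gives the assertion. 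The only real obstacle is the bookkeeping in the case $y\notin B_{2r}$; otherwise the argument is just the convexity (trapezoid) estimate for the exponential followed by the power-mean inequality.
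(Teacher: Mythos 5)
Your proof is correct and follows essentially the same route as the paper's: both rest on the key estimate $|e^{s}-e^{t}|\leq \tfrac12(e^{s}+e^{t})|s-t|$ (which you obtain as the chord/trapezoid bound for the convex exponential, while the paper derives it from a power-series argument using the sublemma inside the proof of Lemma~\ref{l:f^p}), followed by convexity of $\tau\mapsto\tau^{p}$ and a case analysis on the position of $x,y$ relative to $B_{r}$ and $B_{2r}$. Your organization of the cases through the integral of $-h'$ over the clipped interval $[s\vee r,\,t\wedge 2r]$, including the separate bookkeeping for $y\notin B_{2r}$ where $g(y)=0$, is a clean and complete way of carrying out what the paper only sketches as ``six cases.''
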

\begin{proof}
From the second part of the proof of Lemma~\ref{l:f^p}, we see that
$|s^{k}-t^{k}|\leq k|s^{k-1}-t^{k-1}||s-t|/2$ and, therefore,
\begin{align*}
    |e^{s}-e^{t}|&
    \leq {\sum_{k=1}^{\infty}\frac{1}{k!}|s^{k}-t^{k}|}
    \leq
\frac{1}{2}\sum_{k=1}^{\infty}\frac{1}{(k-1)!}(s^{k-1}+t^{k-1})
|s-t|\\
&\leq\frac{(e^{s}+e^{t})}{2}|s-t|
\end{align*}
for $s,t\ge0$. By Jensen's inequality we conclude
\begin{align*}
|e^{s}-e^{t}|^{p}\leq \frac{1}{2}(e^{sp}+e^{tp})|s-t|^{p}
\end{align*}
Without loss of generality we can assume $f(x)\ge f(y)$. Now, we
distinguish the six cases $x,y\in B_{r}$ and $x\in B_{r}, y\in
B_{2r}\setminus B_{r}$ and $x\in B_{r}, y\in X\setminus B_{2r}$ and
$x,y\in B_{2r}$ and $x\in B_{2r}\setminus B_{r}, y\in X\setminus
B_{2r}$ and $x,y\in X\setminus B_{2r}$ to finish the proof.
\end{proof}

\begin{proof}[Proof of Theorem~\ref{t:brooks}]
Let $p\in[1,\infty)$ and $d$ be a pseudo metric such that all
distance balls have finite cardinality and such that
\[
 \sum_{y\in X}b(x,y)d(x,y)^{p}\leq m(x)\quad\hbox{ for all } x\in X\ .
 \]
  Let $\al>\mu/p$ and
let $(f_{k})$, $(g_{k})$ be the sequences of functions given by
Lemma~\ref{l:f}. We see that $f_{k},g_{k}\in C_{c}(X)$, by the assumption of finiteness of the balls. Using the definition of $\lm_{p}^{(0)}$,
Lemma~\ref{l:Lip} and the assumption on $d$, we obtain
\begin{align*}
    \lm_{p}^{(0)}\|f_{k}\|_{m,p}^{p}&\leq \frac{1}{2}\sum_{x,y\in
    X}b(x,y)|f_{k}(x)-f_{k}(y)|^{p}\\
    &\leq \frac{{\al^{p}}}{2}\sum_{x\in X}g_{k}(x)^{p}\sum_{y\in
    X}b(x,y)d(x,y)^{p}\\
    &\leq \frac{\al^{p}}{2}\|g_{k}\|_{m,p}^{p}.
\end{align*}
Since by~Lemma~\ref{l:f} $\|f_{k}\|_{m,p}/\|g_{k}\|_{m,p}\to 1$ as
$k\to\infty$ we now deduce $\lm_{p}^{(0)}\leq \al^{p}/2$ for all
$\al>\mu/p$. Thus, the statement of the theorem follows.
\end{proof}

\section{Appendix}

The \emph{ discrete $p$-Laplacian} $\mathcal{L}_{p}$ is a quasi-linear (linear for $p=2$) operator defined by
\begin{align*}
    \mathcal{L}_{p}f(x)=\frac{1}{m(x)}\sum_{y\in X}b(x,y)|f(x)-f(y)|^{p-2}(f(x)-f(y))\ .
\end{align*}
on
\[
 \mathcal{F}_{p}:=\{f\in C(X)\mid \sum_{y\in X}b(x,y)|f(y)|^{p-1}<\infty \mbox{ for all }x\in X\}.
\]
The parabolic equation
\begin{equation}\label{i:eq-parabolic}
\frac{d }{dt}u(t,x)+\mathcal L_p u(t,x)=0,\qquad t> 0,\ x\in X\ ,
\end{equation}
associated with this operator has received some attention lately, cf.~\cite{Mug13,HuaMug15} and the references therein. One can naturally regard this as an evolution equation in the Hilbert space $\ell^2(X;m)$: mimicking the techniques of~\cite[\S~4.4]{DibUrbVes04} one sees that its long-time behavior is determined by the Rayleigh-type quotient
\[
\frac{\Efun_{p}(\ph)}{\|\ph\|^{2}_{m,2}}
\]
evaluated at $\ph=u_0$, the initial data of the above problem. Observe that this functional is not homogeneous, and in fact this is \emph{not} the quotient we have considered throughout this paper. Indeed, homogeneity is an important property of energy functionals and its lack significantly complicates the parabolic theory of~\eqref{i:eq-parabolic}, which suggests to introduce the relevant functional of this paper,
\begin{equation}\label{eq:trud}
\frac{\Efun_{p}(\ph)}{\|\ph\|^{p}_{m,p}}\ .
\end{equation}

In search of homogenization procedures that would allow for a non-linear extension of linear Harnack inequalities, it was observed in~\cite{Tru68} that~\eqref{i:eq-parabolic} should be replaced by what is now occasionally called the \emph{Trudinger equation}: in our setting it reads
\begin{equation}\label{i:eq-parabolic-2}
\frac{d }{dt}(u^{p-1})(t,x)+\mathcal L_p u(t,x)=0,\qquad t> 0,\ x\in X\ ,
\end{equation}
whose corresponding eigenvalue equation~\eqref{eq:weaksol} is associated with the functional in~\eqref{eq:trud}.
In analogy with the classical theory of Laplacians on domains of $\mathbb R^d$, we actually can think of the quantities $\lmin$ and $\lmax$ introduced in this paper as the spectral gap of the Dirichlet and Neumann realizations of the $p$-Laplacian $\mathcal L_p$.

\bibliographystyle{alpha}
\bibliography{../../referenzen/literatur}

\end{document}